\newcommand{\intav}[1]{\mathchoice {\mathop{\vrule width 6pt height 3 pt depth  -2.5pt
\kern -8pt \intop}\nolimits_{\kern -6pt#1}} {\mathop{\vrule width
5pt height 3  pt depth -2.6pt \kern -6pt \intop}\nolimits_{#1}}
{\mathop{\vrule width 5pt height 3 pt depth -2.6pt \kern -6pt
\intop}\nolimits_{#1}} {\mathop{\vrule width 5pt height 3 pt depth
-2.6pt \kern -6pt \intop}\nolimits_{#1}}}
\def\polhk#1{\setbox0=\hbox{#1}{\ooalign{\hidewidth\lower1.5ex\hbox{`}\hidewidth\crcr\unhbox0}}}
\DeclareMathOperator{\Tr}{Tr}
\newtheorem{teo}{Theorem}
\newtheorem{Definition}{Definition}
\newtheorem{Lemma}{Lemma}
\newtheorem{Proposition}{Proposition}
\newtheorem{Remark}{Remark}
\newtheorem{Assumption}{A}
\def \suchthat {\ \big | \ }
\begin{document}

\title{\bf Fractional Sobolev regularity for fully nonlinear elliptic equations}
\author{Edgard A. Pimentel, Makson S. Santos, and Eduardo V. Teixeira}

\date{\today} 

\maketitle

\begin{abstract}

\noindent  We prove higher-order fractional Sobolev regularity for fully nonlinear, uniformly elliptic equations in the presence of unbounded source terms. More precisely, we show the existence of a universal number $0< \varepsilon <1$, depending only on ellipticity constants and dimension, such that if $u$ is a viscosity solution of $F(D^2u) = f(x) \in L^p$, then $u\in  W^{1+\varepsilon,p}$, with appropriate estimates. Our strategy suggests a sort of fractional feature of fully nonlinear diffusion processes, as what we actually show is that $F(D^2u) \in L^p \implies (-\Delta)^\theta u \in L^p$, for a universal constant $\frac{1}{2} < \theta <1$.  We believe our techniques are flexible and can be adapted to various models and contexts.

\medskip

\noindent \textbf{Keywords}: Fully nonlinear elliptic equations; regularity theory in fractional Sobolev spaces; $ C ^{1,\alpha}$-aperture functions.

\medskip 

\noindent \textbf{MSC(2010)}: 35B65; 35J60; 35J70.
\end{abstract}

\section{Introduction}\label{introduction}

We establish higher-order fractional regularity for the viscosity solutions of uniformly elliptic equations of the form
\begin{equation}\label{eq_main}
	F(D^2u) = f(x) \hspace{.4in}\mbox{in}\hspace{.1in} B_1,
\end{equation}
where $F\colon \mathcal{S}(d)\to\mathbb{R}$ is a $(\lambda,\Lambda)$-elliptic operator, and the source term $f\in L^p(B_1)\cap C(B_1)$, for $d<p\leq\infty$. We prove new interior estimates in Sobolev spaces $W^{1+\varepsilon,p}_{\rm loc}(B_1)$, for a universal constant $0< \varepsilon < 1$. 

The regularity theory for fully nonlinear elliptic equations has played a prominent role in mathematical analysis since its launch in the early 1980's and, while substantial advances have been made by several authors through the years, fundamental questions remain open until today. The first development in the theory is the Krylov-Safonov Harnack inequality \cite{krysaf}, which can be used to produced universal estimates in the $C^\alpha$ spaces, for a universal parameter $\alpha\in(0,1)$, for {solutions} of 
\[
	F(D^2u)=0\hspace{.2in}\mbox{in}\hspace{.2in} B_1.
\]
Through a linearization argument, $C^{1,\alpha}$-regularity results can be obtained; see, for instance, \cite{caf, trudinger}. Under convexity assumption upon the operator, the regularity of the solutions switches to the classical regime. That is, solutions to $F=0$ become $C^{2,\alpha}$, with estimates; this is the subject of the so-called Evans-Krylov theory \cite{krylovc2a}. 

In \cite{caf}, Caffarelli examines non-homogeneous fully nonlinear elliptic equations in the presence of variable coefficients. The findings reported in that paper cover regularity results in H\"older spaces -- $C^{1,\alpha}$ and $C^{2,\alpha}$ estimates. Caffarelli also launches in that paper a regularity theory in Sobolev spaces, by establishing estimates in $W^{2,p}$, for $p>d$. An essential assumption in the $W^{2,p}$-regularity theory is a convexity-like condition on the operator. To be precise, in \cite{caf}, it is assumed the homogeneous equation driven by the fixed-coefficients counterpart of the operator to have $C^{1,1}$-estimates. 
The requirement $p>d$ is weakened in \cite{escauriaza}, by means of an improved Harnack inequality proven in \cite{fabesstroock}. Caffarelli's estimates are then established in $W^{2,p}$ for $d/2\leq p_0<p$; the universal constant $p_0=p_0(\lambda,\Lambda,d)$ is called \emph{Escauriaza exponent} and plays a fundamental role in the theory of viscosity solutions of fully nonlinear equation. See \cite{M5} for sharp regularity estimates in such a regime. Concerning regularity estimates in H\"older spaces, if $p>d$, solutions are $C^{1,\mu}$-regular, with estimates; see e.g. \cite{caf, cafcab, M5, S}.

Furthering the regularity theory in Sobolev spaces, local estimates in $W^{1,q}$ are the subject of \cite{S}. In that paper, the author considers viscosity solutions to 
\[
	F(D^2u,Du,u,x)=f\hspace{.2in}\mbox{in}\hspace{.2in}B_1
\]
under usual structure conditions on $F$; see \cite{CCKS}. By supposing $f\in L^p(B_1)$, with $p_0<p\le d$, the author establishes estimates in $W^{1,q}_{{\rm loc}}(B_1)$ for every $q>1$ satisfying
\[
	q<p^*:=\frac{dp}{d-p}\hspace{.2in}\mbox{and}\hspace{.2in}d^*=\infty.
\]
While striking, such estimates are restricted to the level of the gradient of the solutions. This raises a fundamental question: what's the \emph{optimal}  degree of differentiability of solutions under merely uniform ellipticity? 

As concerns optimal regularity in H\"older spaces, this question was settled by the program carried out in \cite{NV1,NV2,NV3,NV4}. In those papers, the authors show that $C^{1,\alpha}$ estimates are indeed optimal. In Sobolev spaces, however, optimality remained largely open.

Our main result amounts to an integral estimate for fractional derivatives of order $1+\varepsilon$, where $\varepsilon\in(0,1)$ is a universal number, i.e. depends only on dimension and ellipticity constants (see the statement of Theorem \ref{theo_main} below). As a conclusion, one finds that ellipticity enforces a universal control on weak derivatives of order \emph{strictly higher than 1}. Our main result reads as follows:

\begin{teo}[Fractional Sobolev Regularity]\label{theo_main}
Let $u \in   C (B_1)$ be a viscosity solution to \eqref{eq_main}. Assume $F$ is $(\lambda, \Lambda)$-elliptic and $d<p < \infty$. Let $\varepsilon\in(0,\alpha_0)$, where $\alpha_0\in(0,1)$ is the exponent associated with the H\"older regularity for $F=0$. Then $u\in W_{\rm loc}^{(1+\varepsilon), p}(B_{1})$. In addition, there exists a positive constant $C=C(d,\lambda,\Lambda, \varepsilon,p)$ such that
\begin{equation}\label{est main thm}
	\|u\|_{W^{(1+\varepsilon), p}({B}_{1/2})} \leq C\left(\left\|u\right\|_{L^\infty(B_1)}+\left\|f\right\|_{L^p(B_1)}\right).
\end{equation}
\end{teo}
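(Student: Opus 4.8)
The plan is to prove \eqref{est main thm} by recasting $W^{1+\varepsilon,p}$-regularity as an integrability property of a first-order \emph{aperture} functional, and then running a Calder\'on--Zygmund-type measure decomposition driven by approximation with the homogeneous equation $F=0$. For a continuous function $v$ on a domain $\Omega$, I set
\[
 \Theta_\varepsilon(v)(x):=\inf\Big\{A\ge 0\ \big|\ \exists\ \ell\text{ affine such that }\sup_{B_r(x)}|v-\ell|\le A\,r^{1+\varepsilon}\ \text{ for all }r\le\mathrm{dist}(x,\partial\Omega)\Big\},
\]
so that the affine part of $\ell$ records $Dv(x)$ whenever $\Theta_\varepsilon(v)(x)<\infty$. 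I would then invoke the real-analysis fact that, on Lipschitz subdomains, $v\in W^{1+\varepsilon,p}(\Omega')$ for $\Omega'\Subset\Omega$, with $\|v\|_{W^{1+\varepsilon,p}(\Omega')}\le C\big(\|v\|_{L^p(\Omega)}+\|\Theta_\varepsilon(v)\|_{L^p(\Omega)}\big)$; this is the $C^{1,\alpha}$-aperture viewpoint announced in the abstract, and it reduces the theorem to the estimate $\|\Theta_\varepsilon(u)\|_{L^p(B_{1/2})}\le C\big(\|u\|_{L^\infty(B_1)}+\|f\|_{L^p(B_1)}\big)$.

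After the standard normalization $\|u\|_{L^\infty(B_1)}\le 1$ and $\|f\|_{L^p(B_1)}\le\delta$, with $\delta>0$ as small as we please — which is legitimate by rescaling the equation through $F\mapsto F_t(M):=t\,F(t^{-1}M)$, preserving ellipticity — the heart of the matter is an \emph{approximation and density} lemma: there are universal $M_0>1$, $\mu\in(0,1)$ and $\delta>0$ such that $\|f\|_{L^d(B_1)}\le\delta$ implies $|\{\Theta_\varepsilon(u)>M_0\}\cap B_{1/2}|\le\mu$. This is obtained by compactness: if $f$ is small in $L^p$, stability of viscosity solutions together with the ABP estimate and the Krylov--Safonov Harnack inequality forces $u$ to be locally uniformly close to a solution $h$ of $F(D^2h)=0$, which by hypothesis enjoys an interior $C^{1,\alpha_0}$ estimate; hence $h$ has, near every point, a tangent plane $\ell$ with $|h-\ell|\le C\,r^{1+\alpha_0}$, and because $\varepsilon<\alpha_0$ one can fix a small scale $\rho$ with $C\rho^{1+\alpha_0}\le\tfrac12\rho^{1+\varepsilon}$, which makes $\Theta_\varepsilon(u)\le M_0$ away from a set of small measure. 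The restriction $\varepsilon<\alpha_0$ enters precisely here, and this is also why the conclusion outruns a pointwise $C^{1,\varepsilon}$-iteration: an iteration valid at every point would force $\varepsilon<1-d/p$ because of the scaling of the source term, whereas here the exceptional points are controlled only in measure.

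The final step is to iterate across dyadic scales. Rescaling $u$ around points where $\Theta_\varepsilon(u)>M_0^{k}$ and applying the lemma at each generation, together with a Calder\'on--Zygmund cube decomposition of Caffarelli--Cabr\'e type, yields
\[
 \big|\{\Theta_\varepsilon(u)>M_0^{k+1}\}\cap B_{1/2}\big|\le\mu\Big(\big|\{\Theta_\varepsilon(u)>M_0^{k}\}\cap B_{3/4}\big|+\big|\{\mathfrak M(|f|^d)>c\,M_0^{kd}\}\cap B_{3/4}\big|\Big),
\]
where $\mathfrak M$ denotes the Hardy--Littlewood maximal operator. Multiplying by $M_0^{kp}$, summing over $k$, and using $p>d$ — so that $\mathfrak M$ is bounded on $L^{p/d}$ and $\|\mathfrak M(|f|^d)\|_{L^{p/d}}\le C\|f\|_{L^p}^d$ — gives $\Theta_\varepsilon(u)\in L^p(B_{1/2})$ with $\|\Theta_\varepsilon(u)\|_{L^p(B_{1/2})}\le C\big(1+\|f\|_{L^p(B_1)}\big)$; undoing the normalization produces \eqref{est main thm}. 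I expect the principal difficulty to be the approximation and density lemma — making the compactness argument quantitative enough to produce a \emph{universal} threshold $M_0$ and a genuine density deficit $\mu$ — and, to a lesser degree, arranging the iteration so that the source term is tracked sharply through $\mathfrak M(|f|^d)$ rather than costing a power in $p$; the real-analysis characterization of $W^{1+\varepsilon,p}$ via $\Theta_\varepsilon$ with the correct indices is a further point requiring care.
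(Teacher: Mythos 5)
Your measure-theoretic core---normalizing so that $\|u\|_{L^\infty}\le 1$ and $\|f\|_{L^p}$ is small, the compactness/approximation lemma exploiting the interior $C^{1,\alpha_0}$ estimate for $F=0$, and the dyadic Calder\'on--Zygmund iteration that tracks the source through the maximal function $m(|f|^d)$ and uses $p>d$ to sum the series---is exactly the paper's Section~\ref{sec_prov} (Lemmas \ref{lem_7.9}--\ref{lem_7.12} and Proposition \ref{prop_delta2}), with your $\Theta_\varepsilon$ playing the role of the paper's $C^{1,\alpha}$-aperture function defined via two-sided touching by $C^{1,\alpha}$-cones. Where you genuinely diverge is the final translation from aperture to Sobolev regularity. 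You invoke a sharp-maximal-function characterization, $\|v\|_{W^{1+\varepsilon,p}}\lesssim \|v\|_{L^p}+\|\Theta_\varepsilon(v)\|_{L^p}$. The paper instead notes that the aperture dominates the second differences $\Delta^{1+\alpha}_h u$ uniformly in $h$, integrates them against $|y|^{-(d+\sigma)}$ with $\sigma<1+\alpha$ to conclude $(-\Delta)^{\sigma/2}(u\chi_{B_{1/2}})\in L^p$, and then imports local $W^{2s,p}$ regularity for the Dirichlet fractional Laplacian (Proposition \ref{prop_lapfrac}). Your claimed real-analysis fact, with equal indices on both sides, is slightly too strong: $L^p$ control of the order-$(1+\varepsilon)$ pointwise aperture places $v$ in a DeVore--Sharpley-type space embedding into $B^{1+\varepsilon}_{p,\infty}$, not into $W^{1+\varepsilon,p}=B^{1+\varepsilon}_{p,p}$; one must surrender an arbitrarily small amount of smoothness. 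That loss is harmless here because $\varepsilon$ ranges over an open interval (and the paper's route loses an analogous epsilon in passing from $1+\alpha$ to $\sigma$), but the statement needs a precise citation or proof---it is the one external input your argument cannot do without, and it is precisely what the paper's fractional-Laplacian detour is designed to replace.

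There is one genuine gap, inside your approximation-and-density lemma. From ``$u$ is uniformly close to $h$'' and ``$h$ is $C^{1,\alpha_0}$'' you conclude that $\Theta_\varepsilon(u)\le M_0$ off a set of measure $\mu$, but $L^\infty$-closeness at a single fixed scale transfers no pointwise $C^{1,\varepsilon}$ information to $u$ at small radii: the aperture demands affine approximation at \emph{all} scales $r\to 0$, and fixing $\rho$ with $C\rho^{1+\alpha_0}\le\tfrac12\rho^{1+\varepsilon}$ controls $h$, not $u$. The missing mechanism is a Lin-type $L^\delta$ (power-decay) estimate for the aperture of the difference $w=(u-h)/\delta$, which solves a uniformly elliptic equation with small data; the ABP/Krylov--Safonov machinery applied to $w$ yields a decay $|A_t(w)\cap Q_1|\le Ct^{-\mu_0}$, and only then does choosing $\delta$ small produce a universal threshold $M_0$ and density deficit. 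In the paper this is Proposition \ref{prop_delta} (equivalently Lemma \ref{lem_7.8}) fed into Lemma \ref{lem_7.10}. You cite ABP and Krylov--Safonov only to justify compactness of the approximating sequence; you also need them, through the touching-by-cones measure estimate for $u-h$, to push the closeness of $u$ to $h$ down to all scales outside a small set.
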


We stress that Theorem \ref{theo_main} is new for dimensions $d\geq 3$, since convexity of the operator $F$ is not required for the existence of $C^2$-regular solutions if $d=2$.

The proof of Theorem \ref{theo_main} draws inspiration from the arguments in \cite{caf}, leading to $W^{2,p}$-estimates for convex operators. However, because we only impose uniform ellipticity, $F$-harmonic functions are not entitled to second-order estimates, and touching the graph of the solutions with paraboloids is not effective in the present context. Hence an alternative must be designed to accelerate key decay rate appearing in the argument. The first main key novelty of our approach is an alternative geometric construction that considers $C^{1,\alpha}$-cones of the form
\[
	\varphi(x):=\ell(x)\pm M|x-x_0|^{1+\alpha},
\]
where $\ell \colon B_1\to\mathbb{R}$ is an affine function and $M>0$ is the opening of the cone. We are interested in the sets of points at which one can touch the graph of the solutions with cones of a certain opening. The idea is to show that information upon the measure of such sets can be translated into integral estimates of fractional order. 

Indeed, an interesting corollary of our findings sheds light on an important issue underlying the regularizing effects in the fully nonlinear setting. Our reasoning suggests, at least heuristically, that while of second order, the diffusion process associated with $F$ is no more efficient than $\alpha$-stable L\'evy process. See \cite{M1}; see also \cite{TeixNotices} for further, heuristic insights on diffusion efficiency versus regularity theory.

The remainder of this paper is organized as follows: Section \ref{sec_assump} presents our main assumptions, whereas Section \ref{sec_nr} gathers a few elementary notions and previous results. In Section \ref{sec_tools}, we resort to some geometric-measure techniques to produce a preliminary level of integrability for the aperture function. Section \ref{sec_prov} refines this integrability using geometric and approximation methods. Sections \ref{sec_tools} and \ref{sec_prov} follow the reasonings of \cite[Chapter 7]{cafcab} closely. The proof of Theorem \ref{theo_main} is the subject of Section \ref{sec_proofs}, where we show viscosity solutions of \eqref{eq_main} also verify a fractional diffusion equation.

\section{Preliminaries}\label{sec_mapm}

This section details the main hypotheses under which we work and presents preliminary facts and results used throughout the paper.

\subsection{Main assumptions}\label{sec_assump}

In what follows, we put forward the main assumptions used in the article. We start with ellipticity condition on the operator $F$.

\begin{Assumption}[Uniform ellipticity]\label{assump_ellip} 
The operator $F \colon S(d) \to \mathbb{R}$ is $(\lambda,\Lambda)$-uniformly elliptic. That is, for every $M, N\in\mathcal{S}(d)$ with $N\geq 0$ we have
\[
	\lambda\|N\|\leq F(M)-F(M+N)\leq\Lambda\|N\|.
\]
\end{Assumption}

Ellipticity can also be phrased in terms of the extremal Pucci operators. For $M\in \mathcal{S}(d)$, we define the Pucci extremal operators $\mathcal{P}_{\lambda,\Lambda}^\pm$ as
\[
{\cal P}_{\lambda,\Lambda}^+(M) := \sup_{A\in {\cal A}_{\lambda,\Lambda}}(-\Tr(AM))
\]
and 
\[
{\cal P}_{\lambda,\Lambda}^-(M) := \inf_{A\in {\cal A}_{\lambda,\Lambda}}(-\Tr(AM)),
\]
where 
\[
	{\cal A}_{\lambda,\Lambda} := \{A \in \mathcal{S}(d) : \lambda I\leq A \leq \Lambda I\}
\]
is the class of $(\lambda,\Lambda)$-elliptic matrices. It is important to note that ${\cal P}_{\lambda,\Lambda}^+(M) = -{\cal P}_{\lambda,\Lambda}^-(-M)$. The condition in A\ref{assump_ellip} is equivalent to
\[
	{\cal P}_{\lambda,\Lambda}^-(N) \leq F(M+N) - F(M) \leq {\cal P}_{\lambda,\Lambda}^+(N)
\]
for any $M,N \in S(d)$. Our next assumption concerns the integrability condition imposed on the source term $f$.

\begin{Assumption}[Integrability of the source term $f$]\label{assump_source}
Fix $p>d$. We suppose $f \in L^{p}(B_1)\cap C(B_1)$. In addition, there exists a constant $C>0$ such that
\[
	\left\|f\right\|_{L^{p}(B_1)} \leq C.
\]
\end{Assumption}

\begin{Remark}[Smallness regime]
Our arguments depend on a smallness regime on the $L^p$-norm of the source term $f$; i.e., we require
\[
	\left\|f\right\|_{L^p(B_1)}\leq \epsilon,
\]
for some $0<\epsilon \ll1$, to be universally determined further. From usual scaling arguments, it shall be clear that such smallness requirement neither imposes further constraints on the data of the problem nor affects the resulting estimates.
\end{Remark}

\begin{Remark}
Throughout the paper, we suppose that $F$-harmonic functions are in $W^{2,p}_{loc}(B_1)$ and make no use of estimates in such spaces. Such a condition is not restrictive, in light of an approximation strategy as the one put forward in \cite[Section 8]{PT}, see also \cite[Theorem 1.1]{PT}.
\end{Remark}

\subsection{Notations and preliminary results}\label{sec_nr}

For $r >0$ and $x_0 \in \mathbb{R}^d$, $B_r(x_0)$ denotes the open ball of radius $r$ centered at $x_0$. For simplicity, we denote $B_r(0)$ with $B_r$. Similarly, $Q_r(x_0)$ will denote the open cube with side $r$ and center $x_0$, i.e.,
\[
Q_r(x_0) := \left\lbrace x\in \mathbb{R}^d: |x-x_0|_\infty <\frac{r}{2}\right\rbrace,
\]
where $|x|_\infty := \max\{|x_1|, \ldots|x_d|\}$. The space of real symmetric $d\times d$ matrices is denoted with $S(d)$. Lastly, we mention that some constants appearing in the paper depend only on the dimension $d$ and the ellipticity $\lambda$ and $\Lambda$; we refer to such a constant as \emph{universal}. Lastly, we call a function $v:B_1\to\mathbb{R}$  normalized if $\|v\|_{L^\infty(B_1)}\leq 1$.

For completeness, we next include the definition of viscosity solution.

\begin{Definition}[Viscosity solution]\label{def_viscosity}
We say that $u\in C(B_1)$ is a viscosity sub-solution to
\begin{equation}\label{eq_defvisc}
	F(D^2u,Du,u,x) = 0\hspace{.4in}\mbox{in}\hspace{.1in}B_1
\end{equation}
if for every $x_0\in B_1$ and every $\phi\in W^{2,p}_{{\rm loc}}(B_1)$ such that $u-\phi$ attains a local maximum at $x_0$, we have
\[
	F(D^2\phi(x),D\phi(x),u(x),x)\leq 0.
\]
We say that $u\in C(B_1)$ is a viscosity super-solution to \eqref{eq_defvisc} if for every $x_0\in B_1$ and every $\phi\in W^{2,p}_{{\rm loc}}(B_1)$ such that $u-\phi$ attains a local minimum at $x_0$, we have
\[
	F(D^2\phi(x),D\phi(x),u(x),x)\geq 0.
\]
If $u\in C (B_1)$ is a viscosity sub and a super-solution to \eqref{eq_defvisc}, we say that $u$ is a viscosity solution to the equation.
\end{Definition}

For $g\in L^1_{\rm loc}(\mathbb{R}^d)$, we define the maximal function of $g$, denoted with $m(g)$, as
\[
m(g)(x) := \sup_{r>0}\frac{1}{|Q_r(x)|}\int_{Q_r(x)}|g(y)|{\rm d}y.
\]
We recall that the maximal operator satisfies 
\begin{equation}\label{max_op}
|\{x\in\mathbb{R}^d : m(g)(x)\geq t\}| \leq \frac{C}{t}\|g\|_{L^1(\mathbb{R}^d)}, \;\;\forall \;t>0.
\end{equation} 

Two important structures in our analysis are the convex envelope of a function and the associated contact set.
\begin{Definition}
Let $\Omega \subset \mathbb{R}^d$ be an open set and $v \in   C (\Omega)$. The convex envelope of $v$ in $\Omega$ is defined by
\[
\Gamma(v)(x) := \sup_{L}\{L(x)\,|\,L \leq v  \;\;\text{in}\;\;  \Omega, \; L \; \text{is affine}\}.
\]
The contact set of v is given by
\[
\{x \in \Omega\;|\; v(x) = \Gamma(v)(x)\}.
\]
\end{Definition}

Because our results rely solely on the ellipticity of the operator $F$, our strategy focuses on the $C^{1,\alpha}$-geometry of solutions. Consequently, we are interested in a contact set related to functions of class $C^{1,\alpha}$ for specific values of $\alpha\in(0,1)$. We proceed with the definition of $ C ^{1,\alpha}$-cone for $\alpha\in(0,1]$.
\begin{Definition}[$ C ^{1,\alpha}$-cone of opening $M$ and vertex $x_0$]
We say that $\psi$ is a convex $ C ^{1,\alpha}$-cone of opening M and vertex $x_0$ if
\[
	\psi(x) = L(x) + \frac{M}{2}|x - x_0|^{1 + \alpha},
\]
where $M$ is a positive constant, and $L(x)$ is an affine function. Similarly, $\psi$ is a concave  $ C ^{1,\alpha}$-cone of opening M and vertex $x_0$ if
\[
	\psi(x) = L(x) - \frac{M}{2}|x - x_0|^{1 + \alpha},
\]
where $M$ is a positive constant, and $L(x)$ is an affine function.
\end{Definition}

The sets collecting the points that can be touched by a $ C ^{1,\alpha}$-cone of certain opening $M$ play a pivotal role in our analysis. In fact, we produce a decay rate for their measure in terms of $M$ and relate this information with a distribution function appearing later (see Definition \ref{def_aperture}). Our next definition rigorously introduces those sets.

\begin{Definition}
Let $O \subset \Omega$ be an open subset, $0<\tau_0<\frac{{\rm diam}(O)}{5}$, and $M > 0$. We define
\[
	\underline{G}_M(u, O) = \underline{G}_M(O)
\]
as the set of all points $z \in O$ for which there exists a concave $ C ^{1,\alpha}$-cone $\psi$ of opening $M$ such that 
\begin{enumerate}
	\item $u(z) = \psi(z)$;
	\item $u(x) > \psi(x)$ for all $x \in B_{\tau_0}(z)$.
\end{enumerate}
We also define:
\[
	\overline{G}_M(u, O) = \overline{G}_M(O)
\]
as the set of all points $z \in O$ for which there exists a convex $ C ^{1,\alpha}$-cone $\psi$ of opening $M$ such that
\begin{enumerate}
	\item $u(z) = \psi(z)$;
	\item $u(x) < \psi(x)$ for all $x \in B_{\tau_0}(z)$.
\end{enumerate}

Finally 
\[
G_M(O) = \underline{G}_M(O)\cap \overline{G}_M(O).
\]
\end{Definition}

Next, we note a monotonicity property related to the sets $G_M$. Let $M_1 > M_2$ and take $x_1 \in \underline{G}_{M_2}(u,O)$. By definition, there exists a concave $ C ^{1,\alpha}$-cone of the form
\[
	\psi(x) = L(x) - \frac{M_2}{2}|x - x_1|^{1+\alpha},
\] 
where $L$ is an affine function, such that $\psi(x_1) = u(x_1)$ and $\psi(x) < u(x)$, for all $x \in B_{\frac{{\rm diam}(O)}{10}}(x_1)$. Hence,
\[
	u(x) > L(x) - \frac{M_2}{2}|x - x_1|^{1+\alpha} > L(x) - \frac{M_1}{2}|x - x_1|^{1+\alpha}= \tilde{\psi}(x),
\]
for all $x \in B_{\frac{{\rm diam}(O)}{10}}(x_1)$. Notice that, 
\[
u(x_1) = \psi(x_1) = \tilde{\psi}(x_1).
\] 
We conclude that $x_1 \in \underline{G}_{M_1}(u,O)$; thus
\[
\underline{G}_{M_2}(u,O) \subset \underline{G}_{M_1}(u,O).
\]
Similarly, we have
\[
\overline{G}_{M_2}(u,O) \subset \overline{G}_{M_1}(u,O).
\]
Therefore,
\[
G_{M_2}(u,O) \subset G_{M_1}(u,O).
\]

The following definition accounts for a family of sets collecting points where a $ C ^{1,\alpha}$-cone cannot touch the graph of the solutions.

\begin{Definition}
Let $O \subset \Omega$ be an open subset, $0<\tau_0<\frac{{\rm diam}(O)}{5}$ and $M > 0$. We define
\[
\underline{A}_M(u,O) = \underline{A}_M(O) = O\backslash \underline{G}_M(u,O). 
\]
Similarly 
\[
\overline{A}_M(u,O) = \overline{A}_M(O) = O\backslash \overline{G}_M(u,O).
\]
Finally
\[
A_M(u,O) = A_M(O) = O\backslash G_M(u,O).
\]
\end{Definition}

Now, we define the $ C ^{1,\alpha}$-aperture function. This structure is directly related to the integrability of the solutions to \eqref{eq_main}.

\begin{Definition}[$ C ^{1,\alpha}$-aperture function]\label{def_aperture}
Let $B\Subset B_1$. For $x \in B$ we define
\[
	\theta(x) := \theta_{1+\alpha}(u,B)(x) = \inf \left\lbrace M \suchthat x\in G_M(u,B) \right\rbrace\in [0, \infty].
\]
\end{Definition} 

Most of our argument relates the aperture function with information on the integrability of fractional derivatives of solutions to \eqref{eq_main}. Indeed, we consider the $(1+\alpha)$-differential quotient of the solutions to \eqref{eq_main} and control it from above by the aperture $\theta$. Conversely, we bound this $(1+\alpha)$-differential quotient from below in terms of the fractional Laplacian of the solutions. Controlling the integrability of the aperture function we transmit bounds for the fractional Laplacian in suitable Lebesgue spaces and conclude the proof. Compare with the analysis in \cite[Proposition 1.1]{cafcab}.

In the sequel, we recall the primary ingredients of the theories unlocking this connection. We start with an auxiliary lemma.

\begin{Lemma}\label{lem_reg}
For $\Omega\subset\mathbb{R}^d$, let $g:\Omega\to\mathbb{R}$ be a nonnegative and measurable function. Define $\mu_g:\mathbb{R}^+_0\to\mathbb{R}^+_0$ as
\[
\mu_g(t) := |\{x\in\Omega ~ | ~ g(x) >t\}|, \;\; t>0.
\] 
Let $\eta>0$ and $M>1$ be constants. Then for $0<p<\infty$,
\[
g\in L^p(\Omega) \Longleftrightarrow \sum_{k\geq 1}M^{pk}\mu_g(\eta M^k) = S < \infty
\]
and
\[
C^{-1}S\leq \|g\|^p_{L^p(\Omega)}\leq C(|\Omega| + S),
\]
where $C>0$ is a constant depending only on $\eta$, M and p.
\end{Lemma}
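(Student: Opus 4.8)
The plan is to prove the equivalence via the standard ``layer-cake'' (distribution function) decomposition of the $L^p$-norm, exactly as in the classical Calder\'on--Zygmund framework (cf. \cite[Chapter 7]{cafcab}). Recall the identity $\|g\|_{L^p(\Omega)}^p = p\int_0^\infty t^{p-1}\mu_g(t)\,{\rm d}t$. The key observation is that $\mu_g$ is nonincreasing, so the integral $\int_0^\infty t^{p-1}\mu_g(t)\,{\rm d}t$ can be compared to the geometric-progression sum $\sum_{k} (\eta M^k)^{p}\mu_g(\eta M^k)$ by splitting the half-line into the dyadic-type intervals $[\eta M^k, \eta M^{k+1})$ for $k\ge 1$, together with the bounded piece $[0,\eta M)$.

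Concretely, I would first handle the lower bound $C^{-1}S \le \|g\|_{L^p(\Omega)}^p$. On each interval $[\eta M^{k-1}, \eta M^{k})$ one has $t^{p-1} \ge (\eta M^{k-1})^{p-1}$ (if $p\ge 1$; for $p<1$ use the other endpoint, absorbing the discrepancy into $C$) and $\mu_g(t)\ge \mu_g(\eta M^k)$ by monotonicity, while the interval has length $\eta M^{k-1}(M-1)$. Summing over $k\ge 1$ yields
\[
\|g\|_{L^p(\Omega)}^p \;\ge\; p\sum_{k\ge 1}\int_{\eta M^{k-1}}^{\eta M^k} t^{p-1}\mu_g(t)\,{\rm d}t \;\ge\; c(\eta,M,p)\sum_{k\ge 1}M^{pk}\mu_g(\eta M^k) \;=\; c\,S,
\]
which in particular shows that if $g\in L^p(\Omega)$ then $S<\infty$. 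For the upper bound, split $\int_0^\infty = \int_0^{\eta M} + \sum_{k\ge 1}\int_{\eta M^k}^{\eta M^{k+1}}$. On $[0,\eta M)$ one crudely bounds $\mu_g(t)\le |\Omega|$ and integrates $t^{p-1}$, producing the $|\Omega|$ term. On $[\eta M^k, \eta M^{k+1})$ one uses $t^{p-1}\le (\eta M^{k+1})^{p-1}$ and $\mu_g(t)\le\mu_g(\eta M^k)$, and length $\le \eta M^{k+1}$, to get a bound by $C(\eta,M,p)\,M^{pk}\mu_g(\eta M^k)$; summing gives $\|g\|_{L^p(\Omega)}^p \le C(|\Omega| + S)$. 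This simultaneously shows $S<\infty \Rightarrow g\in L^p(\Omega)$, completing the equivalence.

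There is no serious obstacle here; the only point requiring a little care is tracking the direction of the monotone bound on $t^{p-1}$ depending on whether $p\ge 1$ or $p<1$, and checking that the ratios of consecutive endpoints are comparable (all powers of $M$), so that every constant that appears is controlled purely in terms of $\eta$, $M$ and $p$. One should also note that the argument uses only $M>1$ (so that the intervals $[\eta M^k,\eta M^{k+1})$ genuinely tile $[\eta M,\infty)$ with comparable lengths) and $\eta>0$ (to fix the base point); the hypothesis $\eta>0$, $M>1$ in the statement is exactly what is needed. The comparison is entirely elementary once the layer-cake formula and monotonicity of $\mu_g$ are in place.
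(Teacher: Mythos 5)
Your argument is correct and is exactly the standard layer-cake/distribution-function comparison on the intervals $[\eta M^{k},\eta M^{k+1})$ that the paper relies on (the lemma is stated there without proof, being the classical result of \cite[Chapter 7]{cafcab} adapted to the scale $\eta M^k$), including the correct handling of the endpoint bounds on $t^{p-1}$ for $p\geq 1$ versus $p<1$. The only implicit point worth noting is that the implication $S<\infty \Rightarrow g\in L^p(\Omega)$ uses $|\Omega|<\infty$, which is tacit in the paper's statement as well and is harmless since the lemma is only applied on bounded sets such as $Q_1$ and $B_{1/2}$.
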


The function $\mu_g$, defined in Lemma \ref{lem_reg}, is known as the \emph{distribution function of $g$}. Next, we recall a corollary of the Calder\'on-Zygmund decomposition. See \cite[Lemma 4.2]{cafcab}. Let $Q_1$ be the unit cube and split it into $2^d$ cubes of half side. Then, split each of these $2^d$ cubes and iterate the process. The cubes obtained in this way are called dyadic cubes.

If Q is a dyadic cube different from $Q_1$, we say that $\tilde{Q}$ is the predecessor of Q if the latter is one of the $2^d$ cubes obtained by dividing $\tilde{Q}$.

\begin{Lemma}[Calder\'on-Zygmund decomposition]\label{lem_cald}
Let $A \subset B \subset Q_1$ be measurable sets and take $0<\delta<1$ such that
\begin{itemize}
\item[(a)] $|A| \leq \delta$;
\item[(b)] if Q is a dyadic cube such that $|A\cap Q| > \delta|Q|$, then $\widetilde{Q} \subset B$, where $\widetilde{Q}$ is the predecessor of Q.
\end{itemize}
Then
\[
|A| \leq \delta|B|.
\]
\end{Lemma}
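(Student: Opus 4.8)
The plan is to run the classical Calder\'on--Zygmund stopping-time argument on the set $A$ with threshold $\delta$, and then to convert the resulting dyadic covering into the desired estimate by invoking hypothesis (b). Normalizing so that $|Q_1|=1$, condition (a) reads $|A|\le\delta$, which says precisely that $Q_1$ itself fails the selection criterion $|A\cap Q|>\delta|Q|$.

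First I would perform the dyadic decomposition: starting from $Q_1$, subdivide any dyadic cube $Q$ with $|A\cap Q|\le\delta|Q|$ into its $2^d$ children, and whenever a child $Q'$ satisfies $|A\cap Q'|>\delta|Q'|$, stop and place $Q'$ in a collection $\{Q_j\}_j$. This produces a countable, pairwise disjoint family of dyadic cubes, each maximal with the property $|A\cap Q_j|>\delta|Q_j|$; in particular each predecessor $\widetilde{Q_j}$ was reached without stopping, hence $|A\cap\widetilde{Q_j}|\le\delta|\widetilde{Q_j}|$. Next, by the Lebesgue differentiation theorem along dyadic cubes, almost every point $x\in A$ has $|A\cap Q|/|Q|\to 1>\delta$ as $Q\to x$ through dyadic cubes containing $x$, so such an $x$ must lie in some selected $Q_j$; therefore $|A\setminus\bigcup_j Q_j|=0$ and $|A|=\sum_j|A\cap Q_j|$.

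Now I would use hypothesis (b): since $|A\cap Q_j|>\delta|Q_j|$, each predecessor satisfies $\widetilde{Q_j}\subset B$. Let $\{R_k\}_k$ denote the maximal cubes, with respect to inclusion, of the family $\{\widetilde{Q_j}\}_j$; these are pairwise disjoint dyadic cubes, all contained in $B$, and every selected $Q_j$ sits inside exactly one $R_k$. Each $R_k$ is itself a predecessor of some selected cube, so $|A\cap R_k|\le\delta|R_k|$. Grouping the selected cubes according to which $R_k$ contains them and using disjointness of the $Q_j$'s gives
\[
|A|=\sum_j|A\cap Q_j|=\sum_k\Big|A\cap\!\!\bigcup_{Q_j\subset R_k}\!\!Q_j\Big|\le\sum_k|A\cap R_k|\le\delta\sum_k|R_k|\le\delta|B|,
\]
the last inequality because the $R_k$ are disjoint subsets of $B$. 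This is the claimed bound.

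The argument is essentially bookkeeping; the two points requiring care are the passage from ``every density point of $A$ lies in a selected cube'' to $|A\setminus\bigcup_j Q_j|=0$, which rests on the dyadic Lebesgue differentiation theorem, and the reduction to the disjoint subfamily $\{R_k\}$ of maximal predecessors --- without this reduction the predecessors may nest and $\sum_j|\widetilde{Q_j}|$ can exceed $|B|$. I expect the latter to be the main, though mild, obstacle.
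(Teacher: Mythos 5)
Your argument is correct and coincides with the standard proof of this lemma (the paper simply cites Lemma 4.2 of Caffarelli--Cabr\'e, whose proof is exactly this stopping-time selection of maximal dyadic cubes, the dyadic Lebesgue differentiation step, and the passage to a disjoint subfamily of predecessors contained in $B$). In particular, you correctly note that hypothesis (a) guarantees $Q_1$ itself is never selected, so every selected cube has a predecessor, and your reduction to the maximal cubes $R_k$ handles the possible nesting of predecessors.
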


Now, we can state a connection between the distribution function $\mu_\theta$ and the measure of the sets $A_M$. In fact,
\[
\mu_{\theta}(t) \leq |A_t(u,B_{1/2})|;
\]
therefore, we study the summability of
\[
\sum_{k\leq 1}M^{pk}|A_{M^k}(u,B_{1/2})|
\]
and the mechanism transmitting information from $\theta$ to $u$.

In what follows, we define the fractional Sobolev spaces. We refer to \cite[Chapter 2]{Nezza-Palatucci-Valdinoci2012} for further details. 

\begin{Definition}[Fractional Sobolev spaces]\label{def_fracsob1}
Let $s \in (0,1)$. For any $p \in [1, +\infty)$ we define $W^{s,p}(\Omega)$ as
\[
W^{s,p}(\Omega) := \left\{u \in L^p(\Omega) : \frac{|u(x)-u(y)|}{|x-y|^{\frac{d}{p} + s}} \in L^P(\Omega\times\Omega)\right\}.
\]
We equip $W^{s,p}(\Omega)$ with a norm denoted with $\|\cdot\|_{W^{s,p}(\Omega)}$ and given by
\[
\|u\|_{W^{s,p}(\Omega)} := \left(\int_{\Omega}|u|^pdx + \int_\Omega\int_\Omega \frac{|u(x) - u(y)|^p}{|x-y|^{d+sp}}{\rm d}x{\rm d}y \right)^{\frac{1}{p}}.
\]
Moreover, we denote with $[u]_{W^{s,p}(\Omega)}$ the Gagliardo seminorm of $u$, defined as
\[
[u]_{W^{s,p}(\Omega)} := \left( \int_\Omega\int_\Omega \frac{|u(x) - u(y)|^p}{|x-y|^{d+sp}}{\rm d}x{\rm d}y \right)^{\frac{1}{p}}.
\]
\end{Definition}

Sobolev spaces involving weak derivatives of higher fractional order are defined next.

\begin{Definition}[Higher fractional order Sobolev spaces]\label{def_fracsob2}
If $s > 1$, we write $s= m + \gamma$, where $m$ is an integer and $\gamma \in (0,1)$. We define
\[
W^{s,p}(\Omega) := \{u\in W^{m,p}(\Omega)\;|\;D^\alpha u \in W^{\gamma,p}(\Omega), \;\; \text{for any}\;\;    \alpha   \;\;\text{s.t.}\;\;  |\alpha|=m\}.
\]
In this case, $W^{s,p}(\Omega)$ is equipped with the following norm:
\[
\|u\|_{W^{s,p}(\Omega)} := \left(\|u\|^p_{W^{m,p}(\Omega)}+\sum_{|\alpha|=m}\|D^{\alpha}u\|^p_{W^{\gamma,p}(\Omega)}\right)^{\frac{1}{p}}.
\]
\end{Definition} 

When $p=2$, we write $W^{s,2}(\Omega) =: H^s(\Omega)$. The space $W_0^{s,p}(\Omega)$ consists of all functions $u \in W^{s,p}(\mathbb{R}^d)$ such that $u = 0$ in $\mathbb{R}^d\setminus \Omega$. In addition, $W^{-s,p}(\Omega)$ denotes the dual space of $W^{s,p}(\Omega)$.

Next, the facts we recall concern the Fourier transform and its relationship with the fractional Laplacian operator. For simplicity, we operate in the context of the Schwartz space, denoted with $\mathcal{S}$; we refer the reader to \cite[Chapter 3]{Nezza-Palatucci-Valdinoci2012}. Standard density arguments allow us to switch from $\mathcal{S}$ to $L^2(\mathbb{R}^d)$, as suitable. In what follows we recall the definition of the fractional Laplacian and the Fourier transform, to which we resort in our argument.

For $s \in (0,1)$ and $v \in {\mathcal S}$, we define the fractional Laplacian operator as
\[
(-\Delta)^sv(x) := -\frac{1}{2}C(d,s)\int_{\mathbb{R}^d}\frac{v(x+y) + v(x-y) - 2v(x)}{|y|^{d+2s}}{\rm d}y. 
\]
The Fourier transform of $u$ is defined by
\[
	{\mathcal F}v(\zeta) := \frac{1}{(2\pi)^{d/2}}\int_{\mathbb{R}^d}e^{-i\zeta\cdot x}v(x){\rm d}x. 
\]

The interaction between the Fourier transform and the fractional Laplacian operator is the subject of the upcoming proposition.

\begin{Proposition}\label{prop_fourfrac}
Let $s \in (0,1)$ and let $(-\Delta)^s\colon  \mathcal{S} \to L^2(\mathbb{R}^d)$ be the fractional Laplacian operator. Then, for any $u \in {\mathcal S}$,
\[
(-\Delta)^su = {\mathcal F}^{-1}(|x|^{2s}({\mathcal F}u))    \hspace{.2in }\text{for all}\hspace{.2in}   x \in \mathbb{R}^d.
\]
\end{Proposition}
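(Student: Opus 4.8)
The plan is to prove the identity $(-\Delta)^s u = \mathcal{F}^{-1}(|x|^{2s}\mathcal{F}u)$ by direct computation of the Fourier transform of the singular integral defining $(-\Delta)^s u$, exploiting that $u \in \mathcal{S}$ so that all manipulations (Fubini, dominated convergence, differentiation under the integral) are justified. First I would write $(-\Delta)^s u(x) = -\tfrac{1}{2}C(d,s)\int_{\mathbb{R}^d} \frac{u(x+y)+u(x-y)-2u(x)}{|y|^{d+2s}}\,\mathrm{d}y$ and observe that the integrand is absolutely integrable: near $y=0$ a second-order Taylor expansion of $u$ gives $|u(x+y)+u(x-y)-2u(x)| \le \|D^2 u\|_{L^\infty}|y|^2$, which controls the singularity since $d+2s-2 < d$, while for large $|y|$ the numerator is bounded by $4\|u\|_{L^\infty}$ and $|y|^{-d-2s}$ is integrable at infinity. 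This ensures $(-\Delta)^s u$ is well defined and, since $u\in\mathcal S$, it lies in $L^2(\mathbb{R}^d)$, so taking its Fourier transform is legitimate.

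Next I would apply the Fourier transform and swap the order of integration by Fubini. Using the translation rule $\mathcal{F}[u(\cdot + y)](\zeta) = e^{i\zeta\cdot y}\mathcal{F}u(\zeta)$ and $\mathcal{F}[u(\cdot - y)](\zeta) = e^{-i\zeta\cdot y}\mathcal{F}u(\zeta)$, the bracketed difference transforms to $\big(e^{i\zeta\cdot y} + e^{-i\zeta\cdot y} - 2\big)\mathcal{F}u(\zeta) = (2\cos(\zeta\cdot y) - 2)\mathcal{F}u(\zeta) = -4\sin^2(\tfrac{\zeta\cdot y}{2})\mathcal{F}u(\zeta)$. Pulling $\mathcal{F}u(\zeta)$ out, this yields
\[
\mathcal{F}[(-\Delta)^s u](\zeta) = \left( -\frac{1}{2}C(d,s)\int_{\mathbb{R}^d} \frac{2\cos(\zeta\cdot y) - 2}{|y|^{d+2s}}\,\mathrm{d}y\right)\mathcal{F}u(\zeta) = C(d,s)\left(\int_{\mathbb{R}^d}\frac{1-\cos(\zeta\cdot y)}{|y|^{d+2s}}\,\mathrm{d}y\right)\mathcal{F}u(\zeta).
\]
It remains to evaluate the scalar multiplier $J(\zeta) := \int_{\mathbb{R}^d}\frac{1-\cos(\zeta\cdot y)}{|y|^{d+2s}}\,\mathrm{d}y$ and to check it equals $C(d,s)^{-1}|\zeta|^{2s}$, which is precisely how $C(d,s)$ is normalized.

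For the multiplier I would first establish the scaling $J(\zeta) = |\zeta|^{2s} J(\zeta/|\zeta|)$ by the change of variables $y \mapsto y/|\zeta|$: writing $\zeta = |\zeta|e$ with $e$ a unit vector, $J(\zeta) = \int \frac{1-\cos(|\zeta| e\cdot y)}{|y|^{d+2s}}\mathrm{d}y = |\zeta|^{2s}\int \frac{1-\cos(e\cdot z)}{|z|^{d+2s}}\mathrm{d}z$. Then by rotational invariance of the integral $J(e)$ is the same for every unit vector $e$, so $J(\zeta) = C(d,s)^{-1}|\zeta|^{2s}$ where $C(d,s)^{-1} := \int_{\mathbb{R}^d}\frac{1-\cos(z_1)}{|z|^{d+2s}}\,\mathrm{d}z$ is a finite positive constant (finiteness being checked by the same two-regime estimate as above, using $1-\cos(z_1) \le \tfrac12 z_1^2$ near the origin). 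Substituting back gives $\mathcal{F}[(-\Delta)^s u](\zeta) = |\zeta|^{2s}\mathcal{F}u(\zeta)$, and applying $\mathcal{F}^{-1}$ yields the claim.

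The main obstacle, and the only genuinely delicate point, is the justification of Fubini's theorem: the joint integrand $\frac{(u(x+y)+u(x-y)-2u(x))e^{-i\zeta\cdot x}}{|y|^{d+2s}}$ is not absolutely integrable over $\mathbb{R}^d_x \times \mathbb{R}^d_y$ globally, because in $x$ the factor $u(x\pm y)-u(x)$ decays but $e^{-i\zeta\cdot x}$ does not help. The standard fix is to split the $y$-integral into $|y|<1$ and $|y|\ge 1$. On $|y|<1$ use the Taylor bound to get an $L^1_x$-in-$x$, $L^1_y$-in-$y$ majorant via $\|D^2 u(\cdot)\|_{L^1}|y|^{2-d-2s}$ (here one uses $u\in\mathcal S$ so $D^2 u\in L^1$); on $|y|\ge 1$ bound the numerator's $L^1_x$ norm by $4\|u\|_{L^1}$ and integrate $|y|^{-d-2s}$, which converges. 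With this decomposition Fubini applies on each piece and the computation above goes through verbatim; alternatively one may first prove the identity for the truncated operator $(-\Delta)^s_\delta$ (integrating over $|y|>\delta$) and pass to the limit $\delta\to 0^+$ using dominated convergence on both sides. I would present the split-and-Fubini version as it is cleanest.
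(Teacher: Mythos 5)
Your argument is correct and is essentially the standard proof of this fact: the paper itself does not prove the proposition but cites \cite[Proposition 3.3]{Nezza-Palatucci-Valdinoci2012}, whose proof is exactly your computation --- pass the Fourier transform through the symmetric second difference, reduce to the multiplier $\int_{\mathbb{R}^d}\frac{1-\cos(\zeta\cdot y)}{|y|^{d+2s}}\,{\rm d}y$, and evaluate it as $C(d,s)^{-1}|\zeta|^{2s}$ by scaling and rotational invariance, with Fubini justified by the same split into $|y|<1$ (Taylor bound) and $|y|\geq 1$. Your handling of the normalization of $C(d,s)$ and of the integrability needed for Fubini is the same as in that reference, so no gap remains.
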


For a proof of this fact, we refer the reader to \cite[Proposition 3.3]{Nezza-Palatucci-Valdinoci2012}. We continue with the definition of a related functional space.

\begin{Definition}
For $s\in\mathbb{R}$ we define
\[
\bar{H}^s(\mathbb{R}^d) = \left\{u \in L^2(\mathbb{R}^d) \suchthat \int_{\mathbb{R}^d}(1 + |x|^{2s})|{\mathcal F}u(x)|^2{\rm d}x < \infty \right\}.
\]
\end{Definition}

\begin{Proposition}\label{prop_coincspac}
Let $s \in (0,1)$. The fractional Sobolev space $H^s(\mathbb{R}^d)$ coincides with $\bar{H}^s(\mathbb{R}^d)$. In particular, for any $u \in H^s(\mathbb{R}^d)$
\[
[u]^2_{H^s(\mathbb{R}^d)} = 2C(n,s)^{-1}\int_{\mathbb{R}^d}|x|^{2s}|{\mathcal F}u(x)|^2{\rm d}x.
\]
\end{Proposition}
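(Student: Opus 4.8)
The plan is to compute the Gagliardo seminorm $[u]_{H^s(\mathbb{R}^d)}$ directly on the Fourier side, first for $u\in\mathcal{S}$ and then by density, and to read off both the claimed identity and the coincidence of the two spaces from the resulting formula. Throughout I write $d=n$, the statement's $C(n,s)$ being $C(d,s)$.

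First I would fix $u\in\mathcal{S}$ and substitute $z=x-y$ in the double integral defining $[u]_{H^s}^2$, obtaining
\[
[u]_{H^s(\mathbb{R}^d)}^2=\int_{\mathbb{R}^d}\frac{1}{|z|^{d+2s}}\left(\int_{\mathbb{R}^d}|u(x)-u(x-z)|^2\,{\rm d}x\right){\rm d}z .
\]
The inner integral is $\|u-\tau_zu\|_{L^2(\mathbb{R}^d)}^2$ with $\tau_zu(\cdot)=u(\cdot-z)$; since $\mathcal{F}(\tau_zu)(\zeta)=e^{-iz\cdot\zeta}\mathcal{F}u(\zeta)$, Plancherel's theorem gives
\[
\|u-\tau_zu\|_{L^2(\mathbb{R}^d)}^2=\int_{\mathbb{R}^d}\bigl|1-e^{-iz\cdot\zeta}\bigr|^2|\mathcal{F}u(\zeta)|^2\,{\rm d}\zeta .
\]
All integrands being nonnegative, Tonelli's theorem permits exchanging the order of integration, so that
\[
[u]_{H^s(\mathbb{R}^d)}^2=\int_{\mathbb{R}^d}|\mathcal{F}u(\zeta)|^2\left(\int_{\mathbb{R}^d}\frac{\bigl|1-e^{-iz\cdot\zeta}\bigr|^2}{|z|^{d+2s}}\,{\rm d}z\right){\rm d}\zeta .
\]

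Next I would evaluate the inner kernel. For $\zeta\neq 0$, the change of variables $w=|\zeta|z$ together with the rotational invariance of Lebesgue measure shows that
\[
\int_{\mathbb{R}^d}\frac{\bigl|1-e^{-iz\cdot\zeta}\bigr|^2}{|z|^{d+2s}}\,{\rm d}z=|\zeta|^{2s}\int_{\mathbb{R}^d}\frac{\bigl|1-e^{-iw_1}\bigr|^2}{|w|^{d+2s}}\,{\rm d}w ,
\]
where $w_1$ denotes the first coordinate of $w$. The last integral is a finite, strictly positive constant: near the origin $|1-e^{-iw_1}|^2\le|w|^2$ is integrated against $|w|^{2-d-2s}$ with $2-2s>0$, while at infinity the integrand is $O(|w|^{-d-2s})$ with $2s>0$. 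By the normalization adopted in the definition of $(-\Delta)^s$ (the same one underlying Proposition \ref{prop_fourfrac}), this constant equals $2C(d,s)^{-1}$, whence
\[
[u]_{H^s(\mathbb{R}^d)}^2=2C(d,s)^{-1}\int_{\mathbb{R}^d}|\zeta|^{2s}|\mathcal{F}u(\zeta)|^2\,{\rm d}\zeta
\]
for every $u\in\mathcal{S}$. A density argument (Schwartz functions are dense in $H^s(\mathbb{R}^d)$, and both sides are lower semicontinuous along such approximations) extends the identity to all $u\in H^s(\mathbb{R}^d)$, with the convention that both sides may be simultaneously infinite. Combining this with the Plancherel identity $\|u\|_{L^2}^2=\|\mathcal{F}u\|_{L^2}^2$ yields
\[
\|u\|_{H^s(\mathbb{R}^d)}^2=\int_{\mathbb{R}^d}|\mathcal{F}u(\zeta)|^2\,{\rm d}\zeta+2C(d,s)^{-1}\int_{\mathbb{R}^d}|\zeta|^{2s}|\mathcal{F}u(\zeta)|^2\,{\rm d}\zeta ,
\]
which is comparable, with constants depending only on $d$ and $s$, to $\int_{\mathbb{R}^d}(1+|\zeta|^{2s})|\mathcal{F}u(\zeta)|^2\,{\rm d}\zeta$. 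Hence $u\in H^s(\mathbb{R}^d)$ if and only if $u\in\bar H^s(\mathbb{R}^d)$, and the two spaces coincide with equivalent norms, establishing the proposition.

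The only genuinely delicate points are bookkeeping ones: justifying the interchange of integrals (immediate from Tonelli, since every integrand is nonnegative) and verifying that the constant produced by the kernel computation is exactly $2C(d,s)^{-1}$ rather than merely proportional to it — a matter of matching the normalizing constant used in the definition of the fractional Laplacian. I expect no substantive obstacle beyond these routine checks.
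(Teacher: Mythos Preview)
Your argument is correct and is precisely the standard proof of this fact. The paper itself does not supply a proof; it simply refers the reader to \cite[Proposition 3.4]{Nezza-Palatucci-Valdinoci2012}, whose argument is exactly the one you have reproduced: substitute $z=x-y$, apply Plancherel to the inner integral, use Tonelli to swap the order, and evaluate the resulting radial kernel by scaling and rotation to extract the factor $|\zeta|^{2s}$ times the constant $2C(d,s)^{-1}$.
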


For the proof of Proposition \ref{prop_coincspac}, we mention \cite[Proposition 3.4]{Nezza-Palatucci-Valdinoci2012}. The next result concerns \emph{local} regularity for the fractional Laplacian equation of order $s$. See \cite[Theorem 1.4]{Biccari-Warma-Zuazua2017} for details.

\begin{Proposition}\label{prop_lapfrac}
Let $u \in W_0^{s,2}(\bar{\Omega})$ be the unique weak solution to
\begin{equation}\label{zuzua_result}
	\begin{cases}
		(-\Delta)^su =f&  \hspace{.2in}  \text{in} \hspace{.2in}   \Omega \\
		u = 0&   \hspace{.2in}  \text{in} \hspace{.2in}  \mathbb{R}^d\setminus \Omega.
	\end{cases}
\end{equation}
where $\Omega \subset \mathbb{R}^d$ is an arbitrary bounded open set and $s \in (0,1)$. 
If $f\in L^{p}(\Omega)$ with $1<p<\infty$, then $u\in W^{2s,p}_{\rm loc}(\Omega)$. In addition, for every $\Omega_0\Subset\Omega$ there exists $C>0$ such that 
\[
	\left\|u\right\|_{W^{2s,p}(\Omega_0)} \leq C\left(\left\|u\right\|_{L^\infty(\Omega)}+\left\|f\right\|_{L^p(\Omega)}\right),
\]
where $C=C(d,p,{\rm diam}(\Omega),{\rm dist}(\Omega_0,\partial\Omega))$.
\end{Proposition}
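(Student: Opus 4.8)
The final statement to prove is Proposition \ref{prop_lapfrac}, the local $W^{2s,p}$-regularity for the fractional Laplace equation. Since the excerpt explicitly attributes it to \cite[Theorem 1.4]{Biccari-Warma-Zuazua2017}, the "proof" here should really be a guided reduction to that reference together with the standard bootstrap mechanism. Let me sketch how I would organize it.

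The plan is to combine the $L^2$-based spectral/Fourier identity already recorded (Propositions \ref{prop_fourfrac} and \ref{prop_coincspac}) with a Calder\'on--Zygmund-type $L^p$ theory for the fractional Laplacian, localized via cutoffs. First I would treat the model case $p=2$: if $f\in L^2(\Omega)$ and $u\in W_0^{s,2}(\overline{\Omega})$ solves \eqref{zuzua_result}, then extending by zero and using Proposition \ref{prop_coincspac} one has $|\zeta|^{2s}\mathcal{F}u=\mathcal{F}f\in L^2$, hence $u\in \bar H^{2s}(\mathbb{R}^d)$ locally, i.e. $u\in W^{2s,2}_{\mathrm{loc}}$, with the quantitative estimate coming from the Plancherel identity in Proposition \ref{prop_coincspac}. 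For general $1<p<\infty$ the Fourier multiplier $|\zeta|^{2s}/(1+|\zeta|^2)^{s}$ is a Mikhlin--H\"ormander multiplier, so that $(-\Delta)^s (I-\Delta)^{-s}$ is bounded on $L^p(\mathbb{R}^d)$; writing $u = (I-\Delta)^{-s}\big((-\Delta)^s u + (\text{lower order})\big)$ and invoking $L^p$-boundedness of Bessel potentials gives $u$ in the Bessel potential space $H^{2s,p}$, which embeds into $W^{2s,p}$ for non-integer $2s$. The localization to $\Omega_0\Subset\Omega$ is handled by a standard commutator argument: for a cutoff $\eta\equiv 1$ on $\Omega_0$, supported in $\Omega$, one computes $(-\Delta)^s(\eta u) = \eta f + [(-\Delta)^s,\eta]u$, and the commutator term is an integral operator with kernel decaying like $|x-y|^{-(d+2s)}$ acting on $u\in L^\infty(\Omega)\subset L^p$, hence lies in $L^p$ with norm controlled by $\|u\|_{L^\infty(\Omega)}$ and geometric quantities $\mathrm{diam}(\Omega)$, $\mathrm{dist}(\Omega_0,\partial\Omega)$. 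Then global $W^{2s,p}(\mathbb{R}^d)$ regularity of $\eta u$ restricts to the desired interior estimate.

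I would then assemble these pieces in order: (i) reduce to the global equation by the cutoff, carefully bounding the nonlocal commutator; (ii) quote the Mikhlin multiplier theorem / Bessel potential mapping properties to get $H^{2s,p}$ regularity of $\eta u$ on $\mathbb{R}^d$; (iii) use the embedding $H^{2s,p}(\mathbb{R}^d)\hookrightarrow W^{2s,p}(\mathbb{R}^d)$ valid since $2s\notin\mathbb{Z}$ when $s\in(1/2,1)$ — and more generally recording that for $2s\le 1$ one works directly with the Gagliardo seminorm characterization; (iv) collect constants to obtain the stated dependence $C=C(d,p,\mathrm{diam}(\Omega),\mathrm{dist}(\Omega_0,\partial\Omega))$. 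Since the statement is quoted verbatim from \cite{Biccari-Warma-Zuazua2017}, the cleanest honest presentation is to say precisely this: the result is \cite[Theorem 1.4]{Biccari-Warma-Zuazua2017}, whose proof proceeds by the cutoff-plus-commutator localization above reducing matters to the $L^p$ Calder\'on--Zygmund theory for $(-\Delta)^s$ on $\mathbb{R}^d$.

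The main obstacle — and the only genuinely nonlocal subtlety — is step (i): unlike the classical Laplacian, $(-\Delta)^s(\eta u)$ does \emph{not} localize cleanly, so one must show the commutator $[(-\Delta)^s,\eta]u$ is an admissible right-hand side. The key observation is that for $x\in\Omega_0$ this term equals $C(d,s)\int_{\mathbb{R}^d}\frac{(\eta(x)-\eta(y))u(y)}{|x-y|^{d+2s}}\,\mathrm{d}y$, and splitting the integral into $|x-y|\lesssim \mathrm{dist}(\Omega_0,\partial\Omega)$ (where $|\eta(x)-\eta(y)|\lesssim\|\nabla\eta\|_\infty|x-y|$, integrable since $2s<2$) and $|x-y|\gtrsim \mathrm{dist}(\Omega_0,\partial\Omega)$ (where the kernel is bounded and $u\in L^1$) gives an $L^\infty$, hence $L^p$, bound by $\|u\|_{L^\infty(\Omega)}$ times the advertised geometric constants. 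Everything downstream is then standard $L^p$ harmonic analysis, and the rest of the excerpt uses only the conclusion of this Proposition, not its proof, so for the purposes of this paper it suffices to cite \cite[Theorem 1.4]{Biccari-Warma-Zuazua2017} with the above remarks.
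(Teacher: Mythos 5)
The paper offers no proof of this proposition: it is quoted verbatim from the literature, with the pointer ``See \cite[Theorem 1.4]{Biccari-Warma-Zuazua2017} for details'' serving as the entire justification. Your decision to ultimately reduce the statement to that citation therefore matches the paper exactly, and for the purposes of this article that is all that is required.

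That said, the proof sketch you append contains a concrete error in the one step you yourself identify as the genuine difficulty. For the near-diagonal part of the commutator $[(-\Delta)^s,\eta]u(x)=C(d,s)\int\frac{(\eta(x)-\eta(y))u(y)}{|x-y|^{d+2s}}\,\mathrm{d}y$, the bound $|\eta(x)-\eta(y)|\lesssim\|\nabla\eta\|_\infty|x-y|$ produces the kernel $|x-y|^{-(d+2s-1)}$, and $\int_{B_R}|z|^{-(d+2s-1)}\,\mathrm{d}z=c\int_0^R r^{-2s}\,\mathrm{d}r$ converges only when $2s<1$, not ``since $2s<2$'' as you claim. For $s\geq 1/2$ the commutator is a genuinely singular operator of positive order $2s-1$; it cannot be bounded in $L^p$ by $\|u\|_{L^\infty(\Omega)}$ alone, since the odd first-order term $\nabla\eta(x)\cdot(x-y)$ does not cancel against a merely bounded $u$ in the principal-value sense. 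This is precisely why the argument in \cite{Biccari-Warma-Zuazua2017} proceeds by a bootstrap (gaining regularity in increments small enough that the commutator, viewed as an operator losing $2s-1$ derivatives, remains admissible), rather than by a single localization. The failure occurs exactly in the regime relevant to this paper, where the proposition is invoked with $s=\sigma/2\in(1/2,1)$. A smaller caveat: the embedding $H^{2s,p}(\mathbb{R}^d)\hookrightarrow W^{2s,p}(\mathbb{R}^d)$ you use in step (iii) holds for $p\geq 2$ but fails in general for $1<p<2$ (the inclusion between $F^{2s}_{p,2}$ and $F^{2s}_{p,p}$ reverses), so as a proof of the statement for all $1<p<\infty$ that step also needs repair, e.g.\ by sacrificing an $\epsilon$ of smoothness. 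Since the paper only cites the result, none of this affects its correctness, but if you intend the sketch as a proof it is incomplete at its central step.
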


\section{Geometric-Measure Tools}\label{sec_tools}

This section discusses an $L^\delta$-estimate for the aperture function, introduced in Definition \ref{def_aperture}. This first level of integrability stems from the uniform ellipticity of $F$ and the integrability of $f$; see A\ref{assump_ellip} and A\ref{assump_source}. We refine such estimate further in the argument, where geometric techniques build upon the regularity available for $F=0$ in $ C ^{1,\beta}$-spaces. To be more precise, following proposition holds true:

\begin{Proposition}[$L^\delta$-estimate for the aperture function]\label{prop_delta}
Let $u \in   C (B_1)$ be a normalized viscosity solution to \eqref{eq_main}. Suppose A\ref{assump_ellip} and A\ref{assump_source} are in force. Then the $C^{1,\alpha}$-aperture function, $\theta_{1+\alpha} = \theta$, defined in Definition \ref{def_aperture}, is in $L^\delta(B_1)$, for some universal $0<\delta\ll1$. In additional, there exists a universal constant $C>0$ such that
\[
	\int_{B_{1/2}}|\theta(x)|^\delta{\rm d}x  \leq  C.
\]
Neither $0<\delta\ll 1$ nor $C>0$ depend on the parameter $\alpha$.
\end{Proposition}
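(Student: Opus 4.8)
### Proof proposal for Proposition \ref{prop_delta}

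\textbf{The plan is to} adapt the classical Caffarelli--Cabré argument that yields the $L^\delta$-estimate for the convex envelope contact set (see \cite[Chapter 7]{cafcab}), replacing paraboloids by $C^{1,\alpha}$-cones throughout. The key heuristic is that a point $z$ fails to be in $G_M(u, O)$ precisely when the graph of $u$ cannot be touched from below/above at $z$ by a cone $L(x) \pm \frac{M}{2}|x-x_0|^{1+\alpha}$ of opening $M$; as $M$ grows, the ``bad'' set $A_M$ shrinks, and I want a geometric-decay estimate $|A_{M^k}(u, B_{1/2})| \le C \rho^k$ for some universal $\rho \in (0,1)$, which via Lemma \ref{lem_reg} (with $g = \theta$, using $\mu_\theta(t) \le |A_t(u, B_{1/2})|$) immediately gives $\theta \in L^\delta$ for $\delta$ small enough that $M^{p\delta}\rho < 1$ when one sets $p \rightsquigarrow \delta$. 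The $\alpha$-independence of $\delta$ and $C$ will come from the fact that the decay rate $\rho$ in the covering argument depends only on ellipticity and dimension.

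\textbf{First I would} establish the measure-theoretic base estimate: there is a universal constant $\mu \in (0,1)$ and a universal $M_0 > 1$ such that if $u$ is a normalized solution and $\|f\|_{L^p(B_1)}$ is small, then $|G_{M_0}(u, Q_1) \cap Q_{1/2}| \ge \mu$, equivalently $|A_{M_0}| \le 1 - \mu$ on the relevant subcube. This is the analogue of the statement that a normalized solution can be touched by a cone of universal opening on a set of definite measure, and it follows from the Alexandroff--Bakelman--Pucci (ABP) estimate applied to $u$ minus a barrier: using the convex envelope of $-(u - \text{paraboloid})^-$ or directly a cone barrier, one gets that on the contact set the Pucci operator controls $f$, and ABP together with the maximal function bound \eqref{max_op} localizes the contact set. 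Here I must be careful that touching by a $C^{1,\alpha}$-cone rather than a paraboloid still gives enough: the second-order information at a contact point of a $C^{1,\alpha}$-cone is degenerate (the Hessian of $|x-x_0|^{1+\alpha}$ blows up or vanishes depending on $\alpha$), so the touching must be interpreted in the viscosity sense against the cone directly, and I expect to need a ``good''/``bad'' cube dichotomy at scale $\tau_0$ comparable to $\mathrm{diam}(O)$.

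\textbf{Next I would} run the Calderón--Zygmund iteration: set $A = A_{M_0^{k+1}}(u, Q_{1/2})$ and $B = A_{M_0^k}(u, Q_{1/2})$. By the monotonicity $A_{M_0^{k+1}} \subset A_{M_0^k}$ established in the excerpt, and by a rescaled version of the base estimate applied on each dyadic cube $Q$ (if $|A_{M_0^{k+1}} \cap Q| > (1-\mu)|Q|$ then the predecessor $\tilde Q$ must lie in $A_{M_0^k}$, because otherwise some point of $\tilde Q$ admits a cone of opening $M_0^k$, which by the scaling $x \mapsto x/\mathrm{side}(Q)$ and the homogeneity $|x-x_0|^{1+\alpha}$ of the cone upgrades to a cone of opening $\sim M_0^{k+1}$ on most of $Q$), the hypotheses of Lemma \ref{lem_cald} are met with $\delta = 1 - \mu$. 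This yields $|A_{M_0^{k+1}}| \le (1-\mu)|A_{M_0^k}|$, hence $|A_{M_0^k}| \le (1-\mu)^k$. The crucial scaling point — and \textbf{the main obstacle} — is verifying that the opening of a cone transforms correctly under the dyadic rescaling: for paraboloids $M|x|^2$ the opening scales by the square of the dilation factor, which is exactly what makes the geometric series close; for $C^{1,\alpha}$-cones $M|x|^{1+\alpha}$ the opening scales by the $(1+\alpha)$-power, so the base of the geometric decay and the admissible $\delta$ will depend on $\alpha$ unless I choose the ratio $M$ in Lemma \ref{lem_reg} adapted to $\alpha$ (e.g. $M = 2^{1+\alpha}$) — I need to check this rescaling bookkeeping carefully to confirm that $\delta$ and $C$ can indeed be taken $\alpha$-independent, as claimed, by absorbing the $\alpha$-dependence into the (still universal, since $\alpha \in (0,1)$ is bounded) comparison constants. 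Finally, I would collect: $\int_{B_{1/2}} \theta^\delta \, dx \le \int_{Q_{1/2}} \theta^\delta \, dx \le C(|Q_{1/2}| + \sum_k M_0^{\delta k} |A_{M_0^k}|) \le C(1 + \sum_k M_0^{\delta k}(1-\mu)^k) = C$, finite for $\delta$ small, which is the desired estimate.
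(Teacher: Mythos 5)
Your overall strategy is exactly the one the paper intends: Proposition \ref{prop_delta} is not proved in the paper but is attributed to the scheme of \cite[Chapter 7]{cafcab} with $C^{1,\alpha}$-cones in place of paraboloids, i.e. an ABP-based base measure estimate (the analogue of Lemma \ref{lem_7.5}), a Calder\'on--Zygmund iteration through Lemma \ref{lem_cald} giving the decay of Lemma \ref{lem_7.8}, and the conversion to integrability via Lemma \ref{lem_reg}. Two concrete issues remain in your sketch. First, your dyadic dichotomy takes $B=A_{M_0^{k}}$ alone; with $f\not\equiv 0$ this step fails as stated, because when a small dyadic cube $Q$ is rescaled to unit size the restriction of $f$ need not be small in $L^{d}$, so the base estimate cannot be invoked on $\tilde Q$ merely because $\tilde Q$ meets $G_{M_0^{k}}$. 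The standard repair -- used in \cite{cafcab} and visible in the paper's own Lemma \ref{lem_7.12} -- is to take $B=\bigl(A_{M_0^{k}}\cap Q_1\bigr)\cup\bigl\{x\in Q_1 : m(f^{d})(x)\ge (c\,M_0^{k})^{d}\bigr\}$; the weak-$(1,1)$ bound \eqref{max_op} controls the measure of the extra set by $C M_0^{-kd}$, and the iteration still closes to a polynomial decay $|A_t\cap Q_1|\le c\,t^{-\mu}$, which is what Lemma \ref{lem_reg} needs. You mention \eqref{max_op}, but you place it in the base estimate, where it plays no role; it belongs in the iteration.

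Second, the ``main obstacle'' you flag (the $(1+\alpha)$-homogeneity of the cones versus the quadratic scaling of the equation) is real but resolvable, and there is a cleaner route than redoing the bookkeeping: on a domain of universally bounded diameter $R$ one has $|x-x_0|^{1+\alpha}\ge R^{\alpha-1}|x-x_0|^{2}$, so a paraboloid of opening $M$ touching $u$ forces a $C^{1,\alpha}$-cone of opening $C(d)M$ (with $C(d)=R^{1-\alpha}\le R$, independent of $\alpha$) to touch as well; hence $A_t^{\rm cone}\subset A_{t/C(d)}^{\rm parab}$ and Lin's paraboloid decay from \cite{cafcab} transfers verbatim with constants free of $\alpha$ -- this is precisely why the proposition can assert that neither $\delta$ nor $C$ depends on $\alpha$. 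If instead you insist on running the iteration with cones, the correct rescaling is the $(1+\alpha)$-homogeneous one, $v(y)=2^{i(1+\alpha)}M^{-k}u\bigl(x_0+2^{-i}y\bigr)$: dividing the operator by $K=2^{i(1-\alpha)}M^{k}$ keeps it $(\lambda,\Lambda)$-elliptic, turns the cone of opening $M^{k}$ into one of opening $1$, and makes the rescaled source even smaller, so no $\alpha$-dependence enters the constants. With these two repairs your argument coincides with the proof the paper has in mind.
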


Proposition \ref{prop_delta} is analogous to Lin's integral estimates, introduced in the linear setting in \cite{Lin_1986}. See \cite{caf,cafcab} for its fully nonlinear counterpart; for a more recent account of this result, see \cite{ArmSilSma_2012, Mooney}. The proof of Proposition \ref{prop_delta} is standard and follows along the same lines as in \cite[Chapter 7]{cafcab}. The important modification, from the technical viewpoint, concerns the use of $C^{1,\alpha}$-cones instead of paraboloids, as in \cite{cafcab}. Though we omit the proof, we recall some of its ingredients for further reference.

\begin{Lemma}\label{lem_7.5}
Let $u \in   C (B_{6\sqrt{d}})$ be a viscosity solution to \eqref{eq_main} in $B_{6\sqrt{d}}$. Suppose A\ref{assump_ellip} and A\ref{assump_source} are in force. Suppose further that $\Omega$ is a bounded domain such that $B_{6\sqrt{d}} \subset \Omega$. Then
\[
	\left|\underline{G}_M(u,\Omega)\cap Q_1\right|  \geq  1 - \sigma,
\] 
where $0<\sigma<1$ and $M>1$ are universal constants.
\end{Lemma}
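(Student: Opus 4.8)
\textbf{Proof proposal for Lemma \ref{lem_7.5}.}

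The plan is to follow the classical barrier-and-Alexandrov-estimate scheme used to prove the analogous statement for paraboloids in \cite[Chapter 7]{cafcab}, replacing paraboloids by concave $C^{1,\alpha}$-cones throughout. First I would fix a smooth bump-type function $\phi$ on $\Omega$ that vanishes outside $B_{2\sqrt{d}}$, equals $-2$ on $\overline{Q_3}$ (so that $\phi \leq -2 < 0$ on $Q_1 \subset Q_3$), and satisfies $0 \leq \phi \leq C$ near $\partial B_{6\sqrt d}$; one needs $\phi$ smooth enough that $\mathcal{P}^+_{\lambda,\Lambda}(D^2\phi)$ is bounded by a universal constant. Set $w := u - \phi$. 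Since $F(D^2 u) = f$ and by ellipticity (A\ref{assump_ellip}, in Pucci form), $w$ is a viscosity supersolution of $\mathcal{P}^-_{\lambda,\Lambda}(D^2 w) \leq |f| + C_\phi =: g$, where $\|g\|_{L^p(B_{6\sqrt d})}$ is universally controlled because of A\ref{assump_source}. On $\partial B_{6\sqrt d}$ we have $w \geq u - \phi \geq -\|u\|_\infty \geq$ (universal), and after normalization $\|u\|_\infty \leq 1$, so $w \geq -1$ on the boundary while $\inf_{Q_3} w \leq \inf_{Q_3} u + 2 \leq 3$. The key point is that $w$ is negative somewhere in $Q_1$ — indeed on $Q_3$, $\phi = -2$, hence $w = u - \phi \geq u + 2 \geq 1 > 0$ is the wrong sign, so one must instead arrange $\phi$ so that $w$ is \emph{positive} on the boundary and forced down inside; I would follow exactly the sign conventions of \cite[Lemma 7.5]{cafcab} and let $\phi$ be a supersolution-type barrier with $\phi \geq 2$ on $\partial B_{6\sqrt d}$ and $\phi = 0$ on $Q_3$, so that $\min_{\overline{B_{6\sqrt d}}} w \leq w(x_0) \leq u(x_0) \leq 1$ for some $x_0 \in Q_1$ while $w \geq 2 - 1 = 1$ on $\partial B_{6\sqrt d}$, forcing the minimum of $w$ into the interior.

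Next I would apply the Alexandrov–Bakelman–Pucci estimate (the $L^\delta$-type/ABP measure estimate, \cite[Lemma 7.4 or its refinement]{cafcab}, which only uses uniform ellipticity and $f \in L^p$, $p > d$, via the Escauriaza improvement) to $w$ on $B_{6\sqrt d}$. This yields a universal constant $\sigma \in (0,1)$ and a universal $M_0 > 1$ such that the \emph{contact set} of the convex envelope of $\min(w,0)$ — equivalently, the set of points where $w$ can be touched from below by a concave $C^{1,\alpha}$-cone of opening $M_0$ (this is where the cone replaces the paraboloid: the ABP contact argument works verbatim with $C^{1,\alpha}$-cones because $|x|^{1+\alpha}$ is still convex, $C^1$, and its "second-order" contribution can be made uniformly small on the relevant scale) — covers a fraction $\geq 1 - \sigma$ of $Q_1$. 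Concretely, at a contact point $z \in Q_1$ one produces an affine $L$ with $L \leq w$ near $z$ and $L(z) = w(z)$; then $\psi := L + \phi - \frac{M}{2}|x - z|^{1+\alpha}$, for $M$ universal (absorbing $M_0$, $\|\phi\|_{C^{1,1}}$, and the diameter $6\sqrt d$), touches $u$ from below at $z$ and stays strictly below $u$ on $B_{\tau_0}(z)$, which is precisely the defining property of $\underline{G}_M(u,\Omega)$. Choosing $M$ to also dominate whatever is needed for the strict inequality on the $\tau_0$-ball (using that $\tau_0 < \mathrm{diam}(\Omega)/5$ is a free small parameter) gives $|\underline{G}_M(u,\Omega) \cap Q_1| \geq 1 - \sigma$.

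The main obstacle — and the one genuinely new point relative to \cite{cafcab} — is verifying that the $C^{1,\alpha}$-cone $|x - z|^{1+\alpha}$ behaves well enough in the ABP/contact argument to replace the paraboloid $|x - z|^2$ while keeping all constants universal and, crucially, \emph{independent of $\alpha$}. The paraboloid has a constant Hessian, which is what makes the ABP area formula and the "bounded measure of the image of the gradient map" estimate clean; the cone is only $C^{1,\alpha}$ and its distributional Hessian blows up like $|x-z|^{\alpha - 1}$ near the vertex. I would handle this by working on a fixed macroscopic scale (radius comparable to $6\sqrt d$) where $|x-z|^{1+\alpha} \leq (\mathrm{diam})^{1+\alpha} \leq C$ and $\nabla(|x-z|^{1+\alpha}) = (1+\alpha)|x-z|^{\alpha-1}(x-z)$ has modulus $(1+\alpha)|x-z|^\alpha \leq C$ uniformly in $\alpha \in (0,1)$, so the gradient image of the cone-family has universally bounded measure; the convexity of $|x-z|^{1+\alpha}$ is all that the touching-from-below argument actually requires, and the quantitative decay in the ABP estimate is driven by $\|g\|_{L^p}$, not by the regularity of the test cones. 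Since the construction never differentiates the cone twice in a pointwise sense — it only uses convexity plus the $C^1$ gradient bound — the argument goes through with $\sigma$ and $M$ depending only on $d, \lambda, \Lambda, p$, which is exactly the assertion, and the remark in Proposition \ref{prop_delta} that the constants are $\alpha$-independent is consistent with this.
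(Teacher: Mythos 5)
Your overall architecture --- a barrier plus the ABP contact-set estimate, then converting the affine support at a contact point into a concave $C^{1,\alpha}$-cone touching $u$ from below --- is the intended one: the paper gives no proof of Lemma \ref{lem_7.5}, referring instead to \cite[Lemma 7.5]{cafcab} with paraboloids replaced by cones, and your closing observation that the cone substitution is harmless is correct (even more directly: since $|x-z|^2\le\max(1,\tau_0)\,|x-z|^{1+\alpha}$ on $B_{\tau_0}(z)$, touching from below by a concave paraboloid of opening $M$ already yields touching by a concave $C^{1,\alpha}$-cone of opening $CM$, with constants independent of $\alpha$). The genuine gap sits at the step carrying the content of the lemma: why the contact set has measure bounded from below \emph{inside} $Q_1$. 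You only require $\mathcal{P}^+_{\lambda,\Lambda}(D^2\phi)$ to be bounded; but for your $\phi$ (a smooth well, equal to $-2$ on $\overline{Q}_3$ and vanishing outside $B_{2\sqrt{d}}$) one has $D^2\phi=0$ on $Q_3$, so the region where $\mathcal{P}^+_{\lambda,\Lambda}(D^2\phi)>0$ is contained in the transition zone $B_{2\sqrt{d}}\setminus Q_3$, disjoint from $Q_1$. The ABP inequality, which bounds $1\le\sup w^-$ by $C$ times the $L^d$-norm over the contact set of $|f|$ plus the positive part of $\mathcal{P}^+_{\lambda,\Lambda}(D^2\phi)$, then only forces the contact set to have positive measure somewhere in $B_{2\sqrt{d}}$; nothing places it in $Q_1$, and no lower bound for $|\underline{G}_M(u,\Omega)\cap Q_1|$ follows. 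The argument in \cite{cafcab} needs the finer barrier of its Lemma 7.4: $\mathcal{P}^+_{\lambda,\Lambda}(D^2\phi)\le C\xi$ with $\xi$ bounded and ${\rm supp}\,\xi\subset\overline{Q}_1$, i.e.\ $\phi$ is a subsolution of the extremal equation away from $\overline{Q}_1$. Combined with the smallness regime $\|f\|_{L^d}\le\epsilon$ (smallness is genuinely used here; the bound $\|f\|_{L^p}\le C$ you invoke is not enough), the $f$-contribution is absorbed and the ABP mass must come from the contact set intersected with $\overline{Q}_1$, which is what yields $|\underline{G}_M(u,\Omega)\cap Q_1|\ge 1-\sigma$, with $\sigma\in(0,1)$ universal but not necessarily small.

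The sign bookkeeping also does not close as written. The correct auxiliary function is $w=u+\phi$ with $\phi\le-2$ on $\overline{Q}_3$ and $\phi\ge0$ near $\partial B_{6\sqrt{d}}$: then $w\ge0$ on $\partial B_{6\sqrt{d}}$ --- this is where one uses the control of $u$ outside $B_{6\sqrt{d}}$ assumed in \cite{cafcab} and available wherever the lemma is applied, cf.\ Lemmas \ref{lem_7.10}--\ref{lem_7.12} --- while $\inf_{Q_3}w\le\inf_{Q_3}u-2\le-1$, which is the configuration ABP requires. Your proposed fix ($\phi\ge2$ on $\partial B_{6\sqrt{d}}$, $\phi=0$ on $Q_3$, $w=u-\phi$) gives $w\le u-2\le-1$ on $\partial B_{6\sqrt{d}}$, not $w\ge1$, so the minimum is not forced into the interior and the ABP estimate does not apply. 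These are repairable, but together with the missing localization of the barrier they mean the write-up does not yet prove the lemma; once the barrier of \cite[Lemma 7.4]{cafcab} and the smallness of $\|f\|$ are in place, the remaining steps you outline --- affine support at contact points, the $C^{1,1}$ bound on $\phi$, the conversion to cones with $\alpha$-independent constants, and working on $B_{\tau_0}(z)\subset B_{6\sqrt{d}}$ --- do go through.
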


For the proof of Lemma \ref{lem_7.5} we refer the reader to \cite[Lemma 7.5]{cafcab}.

We close this section with a decay rate for the measure of the sets $A_t(u,\Omega)\cap Q_1$. The conclusion of the next lemma is equivalent to the statement of Proposition \ref{prop_delta}. We state it next for convenience, as we resort to its formulation as a characterization of $\delta$-integrability for $\theta$.

\begin{Lemma}\label{lem_7.8}
Let $u \in   C (B_{6\sqrt{d}})$ be a viscosity solution to \eqref{eq_main} in $B_{6\sqrt{d}}$. Suppose A\ref{assump_ellip} and A\ref{assump_source} are in force and $\Omega$ is a bounded domain such that $B_{6\sqrt{d}} \subset \Omega$.  Extend $f$ by zero outside $B_{6\sqrt{d}}$. Then
\begin{equation}\label{eq_05}
|A_t(u,\Omega)\cap Q_1| \leq c_2t^{-\mu}, \;\;\forall\;t>0,
\end{equation}
where $c_2$ and $\mu$ are positive universal constants.
\end{Lemma}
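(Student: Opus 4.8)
The plan is to establish the decay estimate \eqref{eq_05} by combining the single-scale population estimate of Lemma \ref{lem_7.5} with the Calder\'on--Zygmund decomposition of Lemma \ref{lem_cald}, iterated over dyadic scales, exactly in the spirit of \cite[Lemma 7.8]{cafcab}, but carrying $C^{1,\alpha}$-cones instead of paraboloids. First I would record the scaling behavior of the sets $G_M$: if $u$ solves $F(D^2u)=f$ in $B_{6\sqrt d}$, then a rescaled copy $u_r(x) = u(r x)/r^{1+\alpha}$ (normalized appropriately) solves an equation of the same type with source $r^{1-\alpha}f(rx)$, and a point lying in $\underline G_M$ at one scale corresponds to lying in $\underline G_{cM}$ at the dilated scale. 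The key point is that, because the cone exponent $1+\alpha$ exceeds $1$, the rescaling is subcritical and the $L^p$-smallness of $f$ is preserved (indeed improved) under zooming in; this is what makes the geometric iteration close.

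The core inductive step is the following: set $A := A_{M^{k+1}}(u,\Omega)\cap Q_1$ and $B := A_{M^{k}}(u,\Omega)\cap Q_1$, where $M>1$ is the universal constant from Lemma \ref{lem_7.5}. I would verify the two hypotheses of Lemma \ref{lem_cald} with a universal $\delta = \sigma \in (0,1)$. Hypothesis (a), $|A_{M}(u,\Omega)\cap Q_1|\le \sigma$, is precisely the complement form of Lemma \ref{lem_7.5} at the base scale. For hypothesis (b), suppose $Q$ is a dyadic cube with $|A_{M^{k+1}}\cap Q| > \sigma|Q|$; I want to show the predecessor $\widetilde Q$ is contained in $B$, i.e. $\widetilde Q \cap A_{M^k} = \emptyset$, equivalently every point of $\widetilde Q$ is touched by a $C^{1,\alpha}$-cone of opening $M^k$. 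Arguing by contradiction, if some point of $\widetilde Q$ is \emph{not} in $G_{M^k}$, then rescaling $\widetilde Q$ to the unit cube (a dilation by a factor comparable to $\mathrm{diam}(\widetilde Q)^{-1}$, composed with subtracting an affine function) produces a solution of the same class to which Lemma \ref{lem_7.5} applies, giving $|\underline G_M \cap Q_1| \ge 1-\sigma$ in rescaled coordinates; translating back through the scaling of cones (opening $M$ at the small scale becomes opening $\lesssim M\cdot M^k = M^{k+1}$ at the original scale, after absorbing the dilation factor into the definition of $M$), this forces $|A_{M^{k+1}}\cap Q|\le \sigma|Q|$, a contradiction. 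One must also handle the auxiliary truncation near $\partial Q_1$ via the hypothesis $B_{6\sqrt d}\subset\Omega$ and the extension of $f$ by zero, which guarantees the rescaled sources stay small in $L^p$ independently of the dyadic cube. Applying Lemma \ref{lem_cald} yields $|A_{M^{k+1}}\cap Q_1| \le \sigma\, |A_{M^k}\cap Q_1|$, and iterating from the base case gives $|A_{M^k}(u,\Omega)\cap Q_1| \le \sigma^{k}$ for all $k\ge 1$.

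To finish, I would convert this geometric-series decay into the stated power law: given $t>0$, choose $k$ with $M^k \le t < M^{k+1}$, so by monotonicity $G_{M^{k+1}}\supset G_t$, hence $A_t\cap Q_1 \subset A_{M^k}\cap Q_1$ (taking the nearest dyadic level below $t$), and therefore $|A_t(u,\Omega)\cap Q_1| \le \sigma^{k} = M^{-\mu k} \le c_2 t^{-\mu}$ with $\mu := -\log\sigma/\log M > 0$ and $c_2 := \sigma^{-1}$ universal. All constants depend only on $d,\lambda,\Lambda$ through $M$ and $\sigma$, and crucially not on $\alpha$, since the $L^\delta$-bound is insensitive to the cone exponent (as asserted in Proposition \ref{prop_delta}); this is because the rescaling factors $r^{1-\alpha}$ are uniformly bounded for $r\le 1$ and $\alpha\in(0,1)$.

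The main obstacle I anticipate is the bookkeeping in the rescaling argument for hypothesis (b): one must check that touching by a $C^{1,\alpha}$-cone is stable under the affine-plus-dilation change of variables mapping $\widetilde Q$ to $Q_1$, that the opening parameter transforms by a \emph{universal} multiplicative constant (so the geometric factor $M$ is the same at every scale), and that the ``thickness'' requirement $u > \psi$ on a ball $B_{\tau_0}(z)$ survives when $\tau_0$ is tied to $\mathrm{diam}(\widetilde Q)$ rather than being fixed — this is exactly why the definition of $\underline G_M$ allows the localization radius $\tau_0$ to depend on $\mathrm{diam}(O)$. Getting the interplay between $\tau_0$, the dyadic scale, and the ambient domain $\Omega \supset B_{6\sqrt d}$ right is the technical heart of the proof; everything else is the standard Calder\'on--Zygmund iteration of \cite[Chapter 7]{cafcab}.
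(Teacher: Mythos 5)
Your overall strategy (the measure estimate of Lemma \ref{lem_7.5} fed into the Calder\'on--Zygmund decomposition of Lemma \ref{lem_cald}, iterated over dyadic scales with $C^{1,\alpha}$-cones) is the intended one -- the paper omits the proof and refers to \cite[Chapter 7]{cafcab} -- but your verification of hypothesis (b) of Lemma \ref{lem_cald} has a genuine gap. You take $B:=A_{M^k}(u,\Omega)\cap Q_1$ and assert that ``the extension of $f$ by zero \dots\ guarantees the rescaled sources stay small in $L^p$ independently of the dyadic cube.'' This is false. If $Q$ has side $2^{-i}$ and you renormalize $u$ on $Q$ by $M^k$ times the appropriate power of $2^{-i}$, the rescaled right-hand side has $L^d$-norm comparable to $M^{-k}\bigl(\,\fint_{Q'}|f|^d\,\bigr)^{1/d}$ over a dilate $Q'$ of $\widetilde Q$; the zero extension does nothing to control this local average, which can be arbitrarily large at small scales when $f$ concentrates near $Q$ (zooming in \emph{amplifies} the $L^d$ density by the factor $2^{id/d}$, so the scaling is not ``subcritical'' in the sense you claim). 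Consequently you cannot conclude $\widetilde Q\subset B$ with your choice of $B$, and the clean recursion $|A_{M^{k+1}}\cap Q_1|\le\sigma|A_{M^k}\cap Q_1|$, hence the pure geometric decay $\sigma^k$, is not justified.

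The missing ingredient is exactly the one the paper deploys in the analogous Lemma \ref{lem_7.12}: the set $B$ must also contain the maximal-function super-level set $\{x\in Q_1: m(f^d)(x)\ge (c_3M^{k})^d\}$, so that when $\widetilde Q\not\subset B$ one can pick $x_1\in\widetilde Q$ which is both in $G_{M^k}$ \emph{and} satisfies $m(f^d)(x_1)\le (c_3M^k)^d$; the latter is what makes the rescaled source small and lets you invoke the rescaled form of Lemmas \ref{lem_7.10}--\ref{lem_7.11}. With this corrected $B$, the Calder\'on--Zygmund lemma gives $\alpha_{k+1}\le\sigma(\alpha_k+\beta_k)$ with $\alpha_k:=|A_{M^k}(u,\Omega)\cap Q_1|$ and $\beta_k$ the measure of the maximal-function set; here the hypothesis ``extend $f$ by zero'' is used to put $f^d\in L^1(\mathbb{R}^d)$ so that the weak $(1,1)$ bound \eqref{max_op} yields $\beta_k\le C M^{-kd}$. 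Iterating then gives $\alpha_k\le C\bigl(\max\{\sigma,M^{-d}\}+o(1)\bigr)^k$, which is still a power decay and yields \eqref{eq_05}, but with $\mu$ determined by both $\sigma$ and $M^{-d}$ rather than your $\mu=-\log\sigma/\log M$. Your secondary bookkeeping concerns (transformation of cone openings under dilation, the role of $\tau_0$, applying the estimate to both $u$ and $-u$ to pass from $\underline G$/$\overline G$ to $G$) are the right things to check, but without the maximal-function component of $B$ the argument does not close.
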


The information in Proposition \ref{prop_delta}, or Lemma \ref{lem_7.8}, is instrumental in what follows. By framing an auxiliary function with $C^{1,\alpha}$-estimates in the context of those results, in the Proposition \ref{prop_delta2} we manage to improve the decay rate put forward in \eqref{eq_05}.

The next section produces an approximation lemma relating the solutions to \eqref{eq_main} with viscosity solutions to $F=0$. The regularity theory available for the latter refines the decay rate for $|A_{M^k}(u, B_{6\sqrt{d}})\cap Q_1|$, ultimately improving the integrability of the aperture function $\theta$.

\section{Higher integrability of the aperture function}\label{sec_prov}

In what follows, we refine the decay rate of the measure of specific sets, leading to improved integrability of the aperture function. The core of this section is the following proposition.

\begin{Proposition}[Improved integrability of $\theta$]\label{prop_delta2}
Let $u \in   C (B_1)$ be a normalized viscosity solution to \eqref{eq_main}. Suppose A\ref{assump_ellip}-A\ref{assump_source} are in force. {{Suppose further that $\left\|f\right\|_{L^p(B_1)}<\epsilon$, for some $\epsilon>0$ to be determined}}. Then $\theta\in L^{p}(B_1)$ and there exists a universal constant $C>0$ such that
\[
	\left\|\theta\right\|_{L^{p}(B_{1/2})}  \leq  C.
\]
\end{Proposition}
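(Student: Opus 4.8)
\textbf{Proof strategy for Proposition \ref{prop_delta2}.}
The plan is to run a Calder\'on--Zygmund-type iteration, in the spirit of \cite[Chapter 7]{cafcab}, using Lemma \ref{lem_cald} as the combinatorial engine and Lemma \ref{lem_7.8} as the base case, but now accelerating the decay of $|A_{M^k}(u,Q_1)|$ by exploiting the $C^{1,\alpha}$-regularity available for $F$-harmonic functions rather than the mere $L^\delta$-bound. Concretely, I would first fix $\alpha$ to be the H\"older exponent $\alpha_0$ (or any $\beta<\alpha_0$) for which $F=0$ has interior $C^{1,\beta}$-estimates, so that $C^{1,\alpha}$-cones are the natural touching surfaces. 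By Lemma \ref{lem_reg} it suffices to prove a geometric decay $|A_{M^k}(u,B_{1/2})\cap Q_1|\le C\sigma^k$ for a universal $\sigma\in(0,1)$ and $M>1$ large, because summing $\sum_k M^{pk}\sigma^k<\infty$ whenever $\sigma$ is chosen (universally) small relative to $M^{-p}$; this is exactly where $p$ enters and where the smallness of $\|f\|_{L^p}$ will be used to buy room.

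The second step is the key measure-theoretic implication needed to feed Lemma \ref{lem_cald}: I would show that there exist universal $M>1$ and $\sigma\in(0,1)$ such that, writing $A:=A_{M^{k+1}}(u,Q_1)$ and $B:=A_{M^k}(u,Q_1)$, one has (a) $|A|\le\sigma$, and (b) for every dyadic cube $Q$ with $|A\cap Q|>\sigma|Q|$ the predecessor $\widetilde Q$ lies in $B$. Part (a) is Lemma \ref{lem_7.8}. Part (b) is the heart of the matter and is proved by contrapositive: if $\widetilde Q\not\subset B$, there is a point in $\widetilde Q$ that can be touched, from both sides, by $C^{1,\alpha}$-cones of opening $M^k$ on a fixed-size neighborhood; rescaling $\widetilde Q$ to unit size, subtracting the affine part, and using the approximation lemma of Section \ref{sec_prov} together with the $C^{1,\beta}$-estimate for the limiting $F$-harmonic function, one obtains that on a large fraction of $Q$ the solution is trapped between $C^{1,\alpha}$-cones of opening $M^{k+1}$ — i.e. $|A\cap Q|\le\sigma|Q|$. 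The scaling must be set up so that passing from opening $M^k$ on the parent cube to opening $M^{k+1}$ on the child cube is consistent with the homogeneity $|x-x_0|^{1+\alpha}$ of the cones; the dilation factor between dyadic generations is $1/2$, and $M$ is chosen universally so that the $C^{1,\beta}$-bound beats the loss coming from this rescaling by the required factor.

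Once (a) and (b) hold, Lemma \ref{lem_cald} gives $|A_{M^{k+1}}(u,Q_1)|\le\sigma|A_{M^k}(u,Q_1)|$, hence $|A_{M^k}(u,Q_1)|\le\sigma^{k}|A_{M}(u,Q_1)|\le\sigma^k$ for all $k$. Then $\sum_{k\ge1}M^{pk}\,\mu_\theta(M^k)\le\sum_{k\ge1}M^{pk}|A_{M^k}(u,B_{1/2})\cap Q_1|\le\sum_{k\ge1}(M^p\sigma)^k<\infty$, provided $\sigma<M^{-p}$, which we arrange by the smallness of $\epsilon$; Lemma \ref{lem_reg} then yields $\theta\in L^p(B_{1/2})$ with $\|\theta\|_{L^p(B_{1/2})}^p\le C(|B_{1/2}|+S)\le C$, a universal bound (a covering argument reduces the ball $B_{1/2}$ to finitely many unit cubes, or one rescales $B_1$ to contain $B_{6\sqrt d}$ as in Lemma \ref{lem_7.8}).

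\textbf{Main obstacle.} The delicate point is step two, part (b): making the rescaling bookkeeping precise so that the gain of one factor $M$ in the cone opening is genuinely universal and independent of $\alpha\in(0,\alpha_0)$ and of the generation $k$. This requires that the approximation lemma supply closeness in $C(\overline{Q})$ with a rate that, after the dyadic rescaling and after dividing by the cone normalization $M^{-k}$, still dominates the error; the natural way is to choose the approximation parameter (hence $\epsilon$) after $M$ and $\sigma$ have been fixed by the $C^{1,\beta}$-estimate, and to check the chain of inequalities quantitatively. The additional subtlety, relative to the paraboloid case in \cite{cafcab}, is that $C^{1,\alpha}$-cones do not enjoy an exact scaling that preserves opening under dilations — the opening transforms with a power of the dilation factor — so one must verify that the universal choice of $M$ absorbs this mismatch uniformly in $\alpha$, which is exactly what the last sentence of Proposition \ref{prop_delta} (independence of $\delta$ and $C$ from $\alpha$) is designed to guarantee at the base level.
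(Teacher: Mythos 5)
Your overall architecture (Calder\'on--Zygmund iteration fed by the approximation lemma, with the exponent $p$ entering through the balance $\rho M^{p}\sim 1/2$ and the smallness of $\epsilon$ fixing $\rho$) matches the paper, but there is a genuine gap in your step (b): you take $B:=A_{M^k}(u,Q_1)$ alone and aim at a purely geometric decay $|A_{M^k}|\le C\sigma^k$. This cannot be closed. In the contrapositive argument, after rescaling the dyadic cube $Q_{1/2^i}(x_0)$ to unit size and normalizing by the cone opening $M^k$, you must put the rescaled pair $(\tilde F,\tilde f)$ back into the smallness regime of Lemma \ref{lem_7.11} (hence of Lemma \ref{lem_7.9}); but the rescaled source norm is
\[
\|\tilde f\|^d_{L^d(B_{8\sqrt d})}=\frac{2^{id}}{M^{kd}}\int_{B_{8\sqrt d/2^i}(x_0)}|f|^d\,{\rm d}x,
\]
and the global smallness $\|f\|_{L^p(B_1)}<\epsilon$ gives no control of this quantity uniformly over all cubes and all generations $k$, since $f$ may concentrate on small cubes. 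The existence of a touching point in $\widetilde Q$ says nothing about $f$ at that scale. This is precisely why the paper's Lemma \ref{lem_7.12} enlarges $B$ to include the set $\{x\in Q_1:\ m(f^d)(x)\ge c_3^dM^{kd}\}$: the point $x_1\in\widetilde Q$ is chosen so that \emph{both} $x_1\in G_{M^k}$ \emph{and} $m(f^d)(x_1)\le (c_3M^k)^d$, and the latter is exactly what bounds the rescaled norm by $c(d)c_3^d\le\epsilon$.

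Consequently the recursion is not $\alpha_{k+1}\le\rho\,\alpha_k$ but $\alpha_{k+1}\le\rho(\alpha_k+\beta_k)$, with $\beta_k:=|\{x\in Q_1:\ m(f^d)(x)\ge (c_3M^k)^d\}|$, and the final summation requires the additional analytic input you omitted: since $f^d\in L^{p/d}$, the maximal function satisfies $m(f^d)\in L^{p/d}$, so Lemma \ref{lem_reg} gives $\sum_k M^{pk}\beta_k\le C$, and only then does $\sum_k M^{pk}\alpha_k$ converge (using $\rho M^p=1/2$ and a discrete convolution estimate). This is where the hypothesis $f\in L^p$ is genuinely used beyond mere smallness; without the maximal-function component your claimed bound $|A_{M^k}|\le C\sigma^k$ is too strong and your step (b) has no proof. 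Your other concerns (scaling of the cone opening under dyadic dilation, choosing $\delta$ and $\epsilon$ after $M$ and $\rho$, the covering from $Q_1$ to $B_{1/2}$, and uniformity in $\alpha$) are handled in the paper essentially as you describe, via Lemmas \ref{lem_7.10}--\ref{lem_7.12} in the enlarged domain $B_{8\sqrt d}$.
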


In establishing Proposition \ref{prop_delta2}, the key ingredient is an approximation lemma importing information from the solutions to $F=0$.

\begin{Lemma}[Approximation Lemma]\label{lem_7.9}
 Let $u \in   C (B_{8\sqrt{d}})$ be a normalized viscosity solution to \eqref{eq_main} in $B_{8\sqrt{d}}$. Suppose that A\ref{assump_ellip}-A\ref{assump_source} are in force. Given $\delta >0$, there exists $0< \epsilon < \delta$ such that, if $\|f\|_{L^p(B_{8\sqrt{d}})} \leq \epsilon$, one can find a function $h \in  C _{{\rm loc}}^{1,\alpha_0}(B_{7\sqrt{d}})$ satisfying
\[
\|u-h\|_{L^\infty_{{\rm loc}}\left(\overline{B}_{7\sqrt{d}}\right)}  \leq  \delta,
\]
where $\alpha_0 =\alpha_0(d,\lambda, \Lambda)\in(0,1)$ accounts for the regularity available for $F=0$. In addition, there exists a universal constant $C=C(d, \lambda, \Lambda, \alpha_0)$ such that
\[
	\left\|h\right\|_{ C ^{1,\alpha_0}(B_{13\sqrt{d}/2})} \leq C\left\|h\right\|_{L^\infty(B_{20\sqrt{d}/3})}.
\]
\end{Lemma}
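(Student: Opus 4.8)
\textbf{Proof proposal for the Approximation Lemma (Lemma \ref{lem_7.9}).}

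The plan is to argue by contradiction and compactness, which is the standard mechanism for this type of statement (compare \cite{caf,cafcab}). Suppose the conclusion fails: then there exists $\delta_0 > 0$ and sequences of operators $F_j$, each $(\lambda,\Lambda)$-elliptic, of source terms $f_j$ with $\|f_j\|_{L^p(B_{8\sqrt{d}})} \to 0$, and of normalized viscosity solutions $u_j \in C(B_{8\sqrt{d}})$ of $F_j(D^2 u_j) = f_j$, such that $\|u_j - h\|_{L^\infty_{\rm loc}(\overline{B}_{7\sqrt{d}})} > \delta_0$ for every $h \in C^{1,\alpha_0}_{\rm loc}(B_{7\sqrt{d}})$ satisfying the stated $C^{1,\alpha_0}$ bound. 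The first step is to extract compactness for $\{u_j\}$: since each $u_j$ is a normalized viscosity solution of a uniformly elliptic equation with small $L^p$ right-hand side ($p > d$), the $C^{1,\mu}$ (equivalently, $C^\alpha$/Harnack-based) a priori estimates of Krylov--Safonov and Caffarelli, as recalled in the introduction, give uniform interior $C^\alpha$ (indeed $C^{1,\mu}$) bounds for $u_j$ on $\overline{B}_{7\sqrt{d}}$. By Arzel\`a--Ascoli, along a subsequence $u_j \to u_\infty$ locally uniformly on $\overline{B}_{7\sqrt{d}}$, with $u_\infty$ normalized.

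The second step is to identify the limit equation. The operators $F_j$ are $(\lambda,\Lambda)$-elliptic, hence uniformly Lipschitz on $S(d)$; along a further subsequence $F_j \to F_\infty$ locally uniformly (on compact subsets of $S(d)$; one may normalize $F_j(0)$ and use the ellipticity to pass to a subsequence), and $F_\infty$ is again $(\lambda,\Lambda)$-elliptic. Stability of viscosity solutions under uniform convergence of the operators and of the solutions — together with $f_j \to 0$ in $L^p$, $p>d$, which suffices to kill the right-hand side in the viscosity sense via the measure-theoretic stability for $L^p$-viscosity solutions (see \cite{CCKS}) — yields that $u_\infty$ is a viscosity solution of $F_\infty(D^2 u_\infty) = 0$ in $B_{7\sqrt{d}}$. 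By the $C^{1,\alpha_0}$ regularity theory for $F_\infty$-harmonic functions (Caffarelli's estimates, \cite{caf,cafcab}; recall the Remark in Section \ref{sec_assump} permits us to assume $W^{2,p}_{\rm loc}$ for $F$-harmonic functions), $u_\infty \in C^{1,\alpha_0}_{\rm loc}(B_{7\sqrt{d}})$ with the interior estimate $\|u_\infty\|_{C^{1,\alpha_0}(B_{13\sqrt{d}/2})} \le C \|u_\infty\|_{L^\infty(B_{20\sqrt{d}/3})}$, the constant being universal. Taking $h := u_\infty$ now furnishes an admissible competitor, and $\|u_j - h\|_{L^\infty_{\rm loc}(\overline{B}_{7\sqrt{d}})} \to 0$ contradicts the assumed lower bound $\delta_0$ for $j$ large. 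This contradiction proves the lemma.

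The main obstacle — and the point requiring the most care — is the stability step: making precise in what sense $u_\infty$ solves the \emph{homogeneous} limiting equation, given that the $u_j$ are only continuous viscosity solutions (not $L^p$-viscosity or $C^2$) and the $f_j$ vanish only in $L^p$ rather than uniformly. The clean way is to invoke the half-relaxed limits / stability theory for $L^p$-viscosity solutions from \cite{CCKS}, which guarantees that locally uniform limits of $L^p$-viscosity solutions of $F_j(D^2u_j)=f_j$, with $F_j\to F_\infty$ locally uniformly and $\|f_j\|_{L^p}\to 0$, are $L^p$-viscosity (hence, for continuous $F_\infty$, $C$-viscosity) solutions of $F_\infty(D^2u_\infty)=0$; one must check the hypotheses of that theorem (uniform ellipticity of the $F_j$, $p>d$ so $p$ exceeds the Escauriaza exponent $p_0$) are met here, which they are. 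A secondary technical point is ensuring the radii bookkeeping is consistent: the uniform $C^{1,\mu}$ estimates for $u_j$ and the $C^{1,\alpha_0}$ estimate for $u_\infty$ are interior, so one works on the nested balls $B_{20\sqrt d/3} \Subset B_{7\sqrt d} \Subset B_{8\sqrt d}$ exactly as in the statement, and the exponent $\alpha_0$ is precisely the one delivered by the homogeneous regularity theory, independent of the sequence.
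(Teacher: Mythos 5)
Your proposal is correct and follows essentially the same route as the paper: a contradiction/compactness argument in which uniform Krylov--Safonov-type estimates give local uniform convergence $u_j\to u_\infty$, the operators converge to a $(\lambda,\Lambda)$-elliptic $F_\infty$, the stability theory of \cite{CCKS} shows $F_\infty(D^2u_\infty)=0$, and the $C^{1,\alpha_0}$ regularity of $F_\infty$-harmonic functions makes $h:=u_\infty$ the contradicting competitor. Your additional care with the $L^p$-viscosity stability step and the radii bookkeeping only makes explicit what the paper leaves implicit.
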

\begin{proof}
We argue  as in \cite{M5}, by supposing, seeking a contradiction, that the statement of the proposition is false. In this case, there exists a number $\delta_0>0$, a sequence of $(\lambda,\Lambda)$-elliptic operators $(F_n)_{n\in\mathbb{N}}$ and sequences of functions $(u_n)_{n\in\mathbb{N}}$ and $(f_n)_{n\in\mathbb{N}}$, satisfying
\begin{equation}\label{eq_08}
	F_n(D^2u_n) = f_n \;\;\text{ in } \;\; B_{8\sqrt{d}}
\end{equation}
and
\[
\|f_n\|_{L^p(B_{8\sqrt{d}})} \leq \frac{1}{n},
\]
with
\begin{equation}\label{eq_09}
\|u_n - h\|_{L^{\infty}(B_{6\sqrt{d}})} > \delta_0,
\end{equation}
for all $h \in  C ^{1,\alpha_0}_{{\rm loc}}(\overline{B}_{7\sqrt{d}})$.

From the regularity available for \eqref{eq_08}, the family $(u_n)_{n\in\mathbb{N}}$ is equibounded in some H\"older space $ C ^\beta$, for some $\beta\in(0,1)$ unknown, though universal. Therefore, there exists a convergent subsequence, still denoted with $(u_n)_{n\in\mathbb{N}}$, and $u_\infty \in  C ^{\frac{\beta}{2}}_{\text{loc}}(B_{7\sqrt{d}})$, such that $u_n\to u_\infty$ locally uniformly.

In addition, by already classical arguments, there exists a $(\lambda,\Lambda)$-elliptic operator $F_\infty$ such that $F_n\to F_\infty$. At this point, standard stability results \cite[Theorem 3.8]{CCKS} imply that $u_\infty$ solves
\[
F_\infty(D^2u_\infty) = 0 \;\;\text{ in }\;\; B_{7\sqrt{d}}.
\]
Since $F$ is a $(\lambda,\Lambda)$-uniformly elliptic operator, we have  $u_\infty  \in  C ^{1,\alpha_0}_{{\rm loc}}(\overline{B}_{7\sqrt{d}})$. By taking $h\equiv u_\infty$, we get a contradiction with \eqref{eq_09}, and the proof is complete.
\end{proof}

In the remainder of this section, we suppose the source term satisfies the smallness regime prescribed in Lemma \ref{lem_7.9}. That is,
\[
	\left\|f\right\|_{L^p(B_{8\sqrt{d}})} \leq \epsilon,
\]
for $0<\epsilon\ll1$ as in the statement of Lemma \ref{lem_7.9}, though yet to be determined.

\begin{Lemma}\label{lem_7.10}
Let $u \in   C (B_{8\sqrt{d}})$ be a normalized viscosity solution to \eqref{eq_main} in $B_{8\sqrt{d}}$. Suppose that A\ref{assump_ellip}-A\ref{assump_source} are in force. For every $\rho\in(0,1)$ one can choose $\varepsilon>0$ such that, if $\left\|f\right\|_{L^d(\Omega)}<\epsilon$ and
\[
	-|x|^{1+\alpha} \leq u(x) \leq |x|^{1+\alpha}\hspace{.4in} \mbox{in}\hspace{.4in} \Omega\backslash B_{6\sqrt{d}},
\]
for fixed $\alpha\in(0,1)$, then 
\begin{equation}\label{eq_10}
|G_M(u,\Omega)\cap Q_1| \geq 1-\rho,
\end{equation}
where $M>1$ in a universal constant.
\end{Lemma}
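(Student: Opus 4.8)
The plan is to combine the Approximation Lemma (Lemma \ref{lem_7.9}) with the geometric-measure estimate of Lemma \ref{lem_7.5}, applied to a suitable modification of the solution $u$. First I would invoke Lemma \ref{lem_7.9}: given $\delta>0$ (to be chosen), there is $\epsilon>0$ so that if $\|f\|_{L^p(B_{8\sqrt d})}\le\epsilon$, then there exists $h\in C^{1,\alpha_0}_{\rm loc}(B_{7\sqrt d})$ with $\|u-h\|_{L^\infty(B_{7\sqrt d})}\le\delta$ and with the interior $C^{1,\alpha_0}$-estimate $\|h\|_{C^{1,\alpha_0}(B_{13\sqrt d/2})}\le C$ (using that $u$ is normalized, so $\|h\|_{L^\infty}$ is controlled). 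Note $C^{1,\alpha_0}\subset C^{1,\alpha}$ for $\alpha\le\alpha_0$, so $h$ has a good $C^{1,\alpha}$ bound. The point of the hypothesis $-|x|^{1+\alpha}\le u(x)\le|x|^{1+\alpha}$ on $\Omega\setminus B_{6\sqrt d}$ is that it pins down the affine behavior of $u$ near the origin (essentially $u(0)=0$, $Du(0)=0$ up to the cone error), and combined with the closeness to $h$ it forces $h$ to be small together with its gradient at appropriate points.

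The core step is to show that $h$ itself can be touched, at most points of $Q_1$, by a $C^{1,\alpha}$-cone of \emph{both} signs with a universal opening $M$. Since $h\in C^{1,\alpha}$ with universal norm $\bar C$, for any $z\in Q_1$ we have $|h(x)-h(z)-Dh(z)\cdot(x-z)|\le \bar C|x-z|^{1+\alpha}$ by the standard $C^{1,\alpha}$ Taylor estimate, so the affine function $\ell_z(x):=h(z)+Dh(z)\cdot(x-z)$ satisfies $\ell_z(x)-\bar C|x-z|^{1+\alpha}\le h(x)\le \ell_z(x)+\bar C|x-z|^{1+\alpha}$ on $B_{\tau_0}(z)$. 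Thus with $M_0:=2\bar C$ every point of $Q_1$ lies in $G_{M_0}(h,\Omega)$ with these cones touching from above and below — except that for $G_M$ we need \emph{strict} inequality away from $z$; this is handled by a tiny enlargement of the opening (replace $M_0$ by $M_0+\delta'$, as in the monotonicity remark preceding the statement). Then one transfers this from $h$ to $u$: if $h$ is touched at $z$ by the concave cone $\psi(x)=\ell_z(x)-\frac{M_0}{2}|x-z|^{1+\alpha}$ with a definite gap, then pushing $\psi$ down by $2\delta$ and using $\|u-h\|_{L^\infty}\le\delta$ one gets $u>\psi-2\delta$ on $B_{\tau_0}(z)$ with equality approached; a standard sliding/vertical-translation argument (translate the cone up until it first touches $u$ from below within $B_{\tau_0}(z)$, and check the contact happens in the interior, not on $\partial B_{\tau_0}(z)$, because the gap at the boundary is at least of order $M_0\tau_0^{1+\alpha}/C$, which dominates $2\delta$ once $\delta$ is small) puts the contact point in $G_{cM_0}(u,\Omega)$ for a universal $c>1$. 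The same works from above, giving a universal $M$ with $z\in G_M(u,\Omega)$ for all $z$ in a large-measure subset of $Q_1$.

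The cleanest way to get the measure estimate \eqref{eq_10} is actually to apply Lemma \ref{lem_7.5} directly to $u$ (and to $-u$, or to the relevant reflection, for the other inclusion $\overline G_M$), which already gives $|\underline G_M(u,\Omega)\cap Q_1|\ge 1-\sigma$ for universal $\sigma$; this does not yet give $1-\rho$ for arbitrarily small $\rho$. To upgrade $\sigma$ to $\rho$, the role of the approximation is essential: the set of points of $Q_1$ where $h$ fails to be touched by an $M$-cone is empty (it is a genuine $C^{1,\alpha}$ function with universal norm), so the only obstruction to $z\in G_M(u,\Omega)$ comes from the $\delta$-discrepancy between $u$ and $h$, and one shows the bad set has measure controlled by a modulus $\omega(\delta)\to0$; choosing $\delta$ (hence $\epsilon$) small enough makes $\omega(\delta)<\rho$. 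I would organize this as: (i) bad set $\subset\{z\in Q_1: $ the cone touching $h$ at $z$ cannot be slid to touch $u$ within $B_{\tau_0}(z)\}$; (ii) this forces the oscillation of $u-h$ near $z$ to exceed a fixed fraction of $M\tau_0^{1+\alpha}$, impossible once $\delta\ll M\tau_0^{1+\alpha}$; so in fact the bad set is empty for $\delta$ small, giving \eqref{eq_10} with $\rho$ absorbed. The one subtlety — and the main obstacle — is the boundary-versus-interior contact issue for the sliding argument: one must verify that when the translated cone first meets $u$ it does so \emph{strictly inside} $B_{\tau_0}(z)$, which is where the normalization $-|x|^{1+\alpha}\le u\le|x|^{1+\alpha}$ outside $B_{6\sqrt d}$ and the careful choice of $\tau_0<\mathrm{diam}(\Omega)/5$ must be used to control $u$ on the far part of $B_{\tau_0}(z)$; getting the quantitative gap there, uniformly in $z\in Q_1$ and in $\alpha\in(0,1)$, is the delicate point, and I expect the proof to follow \cite[Chapter 7]{cafcab} line by line with paraboloids replaced by $(1+\alpha)$-cones.
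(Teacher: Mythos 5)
Your first two steps do match the paper's strategy: invoke Lemma \ref{lem_7.9} to produce $h$ with a universal interior $C^{1,\alpha_0}$ bound, and observe that every point of $Q_1$ belongs to $G_N(h,\Omega)$ for a universal opening $N$ (the growth hypothesis on $u$ outside $B_{6\sqrt d}$ is what transfers to $h$ and makes the touching global). The genuine gap is your step (ii), the claim that once $\delta\ll M\tau_0^{1+\alpha}$ the bad set is empty, i.e.\ that $\|u-h\|_{L^\infty}\le\delta$ alone forces every (or almost every) $z\in Q_1$ into $G_M(u,\Omega)$. This is false: touchability at a \emph{prescribed} point is not stable under small $L^\infty$ perturbations. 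If, say, $u(z)=h(z)+\delta$ while $u=h-\delta$ at points arbitrarily close to $z$, then any concave cone with value $u(z)$ at $z$ lying below $u$ on $B_{\tau_0}(z)$ must drop by $2\delta$ within radius $r$ for every small $r$, forcing its opening to be at least of order $\delta r^{-(1+\alpha)}\to\infty$; hence $z\notin\underline G_M(u,\Omega)$ for every fixed $M$, no matter how small $\delta$ is. Your sliding argument does not repair this: translating the cone that touches $h$ at $z$ until it first meets $u$ produces a contact point \emph{somewhere} in $B_{\tau_0}(z)$, not at $z$ itself, so it yields ``contact points exist near every $z$'' rather than a lower bound on $|G_M(u,\Omega)\cap Q_1|$; turning the former into the latter is precisely the ABP-type content of Lemma \ref{lem_7.5}, and, as you yourself note, applying Lemma \ref{lem_7.5} to $u$ directly only gives the fixed constant $1-\sigma$, not an arbitrary $1-\rho$.

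The paper closes exactly this gap by running the measure estimates on the \emph{difference} rather than on $u$: set $w=(u-h)/\delta$ and note that $w$ solves a $(\lambda,\Lambda)$-elliptic equation $\tilde F(D^2w,x)=\tilde f$, with $\tilde F(M,x)=\delta^{-1}F(\delta M+D^2h)$ and $\|\tilde f\|_{L^d}\le\delta^{-1}\|f\|_{L^d}\le\epsilon$ (here the standing assumption that $F$-harmonic functions lie in $W^{2,p}_{\rm loc}$ and the smallness regime on $f$ enter). Lemmas \ref{lem_7.5} and \ref{lem_7.8} then give $|A_t(w,\Omega)\cap Q_1|\le t^{-\mu}$ for all $t>0$, and scaling back yields $|A_s(u-h,\Omega)\cap Q_1|\le c\,s^{-\mu}\delta^{\mu}$. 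Thus for the fixed opening $N$ the bad set of $u-h$ has measure at most $c\,N^{-\mu}\delta^{\mu}\le\rho$ once $\delta$ is chosen small, and on its complement one adds the cone touching $h$ (available at every point of $Q_1$) to the cone touching $u-h$, landing in $G_M(u,\Omega)$ with $M$ universal, which is \eqref{eq_10}. In short, the smallness of the bad set is an elliptic measure estimate for $u-h$, obtained through the equation it satisfies, not a consequence of the $L^\infty$ proximity alone; this is the ingredient missing from your proposal.
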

\begin{proof}
Fix $0< \delta < 1$, yet to be determined. Let $h$ be the approximating function whose existence follows from Lemma \ref{lem_7.9}. 
Since
\[
\|u-h\|_{L^\infty(\Omega)} \leq 1,
\]
it follows that
\[
-1-|x|^{1+\alpha} \leq h(x) \leq 1+|x|^{1+\alpha} \hspace{.4in}\text{ in }\hspace{.4in}\Omega\backslash B_{6\sqrt{d}}.
\]
Therefore, there exists $1<N=N(d,C)$ such that
\begin{equation}\label{eq_11}
Q_1 \subset G_N(h, \Omega).
\end{equation}
Define
\[
w(x)=\frac{(u-h)(x)}{\delta}.
\]
Notice that $w$ solves
\[
	\tilde{F}(D^2v,x) - \tilde{f}(x) = 0,
\]
where
\[
\tilde{F}(M,x) := \frac{1}{\delta}F\left(\delta M + D^2h\right), 
\]
and
\[
\tilde{f}(x) := \frac{1}{\delta}f(x).
\]
As a consequence of the former inequality, we have
\[
	\|\tilde{f}\|_{L^d} \leq \frac{1}{\delta}\|f\|_{L^d} \leq \epsilon. 
\]
Hence, $w$ is entitled to the conclusions of Lemma \ref{lem_7.5} in $\Omega$. Because of Lemma \ref{lem_7.8}, we obtain
\[
|A_t(w,\Omega)\cap Q_1|\leq t^{-\mu}, \;\;\text{ for all }\;\;t>0.
\]
It follows that
\[
|A_s(u-h, \Omega)\cap Q_1| \leq cs^{-\mu}\delta^\mu\;\;\text{ for all }\;\;s>0.
\]
By choosing $\delta$ small enough we get
\[
|G_N(u-h, \Omega)\cap Q_1| \geq 1-\delta^\mu \geq 1-\rho.
\]
\end{proof}

The proof of Lemma \ref{lem_7.10} sets the proximity-regime encoded by $\delta>0$. As a by-product, it sets the smallness condition on the $L^p$-norm of the source term $f$, encoded by $\epsilon>0$ in the statement of Lemma \ref{lem_7.9}. In the remainder of this section, these constants remain fixed.

\begin{Lemma}\label{lem_7.11}
Let $u \in   C (B_{8\sqrt{d}})$ be a normalized viscosity solution to \eqref{eq_main} in $B_{8\sqrt{d}}$. Suppose that A\ref{assump_ellip}-A\ref{assump_source} are in force, { {with $\left\|f\right\|_{L^d(\Omega)}<\epsilon$}}. If
\begin{equation}\label{eq_012}
G_1(u, \Omega)\cap Q_3\not= \emptyset,
\end{equation}
then
\[
|G_M(u, \Omega)\cap Q_1| \geq 1-\rho,
\]
with $M$ and $\rho$ as in Lemma \ref{lem_7.10}.
\end{Lemma}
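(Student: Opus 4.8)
The plan is to reduce Lemma \ref{lem_7.11} to Lemma \ref{lem_7.10} by a normalization argument that uses the hypothesis \eqref{eq_012} to produce the two-sided $C^{1,\alpha}$-cone bound on $\Omega\setminus B_{6\sqrt d}$ required there. First I would pick a point $x_0\in G_1(u,\Omega)\cap Q_3$. By definition of $G_1$, there exist a convex $C^{1,\alpha}$-cone $\overline\psi(x)=\overline L(x)+\tfrac12|x-x_0|^{1+\alpha}$ and a concave $C^{1,\alpha}$-cone $\underline\psi(x)=\underline L(x)-\tfrac12|x-x_0|^{1+\alpha}$, with affine $\overline L,\underline L$, which touch the graph of $u$ at $x_0$ from above and below respectively on a ball of radius $\tau_0$ — but in fact, following the normalization in \cite[Chapter 7]{cafcab}, one chooses $\Omega$ and $\tau_0$ so that the touching holds on all of $\Omega$ (this is exactly the role of the constraint $\tau_0<\mathrm{diam}(O)/5$ and of working in the enlarged domain $B_{8\sqrt d}\subset\Omega$). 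Hence
\[
	\underline L(x)-\tfrac12|x-x_0|^{1+\alpha}\ \le\ u(x)\ \le\ \overline L(x)+\tfrac12|x-x_0|^{1+\alpha}
	\qquad\text{in }\Omega .
\]

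Next I would subtract an affine function and rescale. Set $v:=u-\ell$ where $\ell$ is a suitable affine function (for instance $\ell=\overline L$, or the average of $\overline L$ and $\underline L$); then $v$ is a viscosity solution of an equation of the same type, $\widehat F(D^2 v)=f$ with $\widehat F(M):=F(M)$ still $(\lambda,\Lambda)$-elliptic, and the sets $G_M$ are unchanged under adding affine functions, i.e. $G_M(u,\Omega)=G_M(v,\Omega)$. For $x\in\Omega\setminus B_{6\sqrt d}$ with $x_0\in Q_3$ we have $|x-x_0|\le \mathrm{diam}(\Omega)+\mathrm{diam}(Q_3)=:R_0$, a universal constant, and $|x-x_0|\ge c_0>0$ is bounded below; combining this with the cone bounds and the fact that the affine pieces $\overline L-\ell$, $\underline L-\ell$ differ by a controlled affine function, one gets a universal constant $K>1$ with
\[
	-K|x|^{1+\alpha}\ \le\ v(x)\ \le\ K|x|^{1+\alpha}\qquad\text{in }\Omega\setminus B_{6\sqrt d}.
\]
Rescaling $w(x):=v(x)/K$ (which divides $f$ by $K$, preserving the smallness $\|f\|_{L^d}<\epsilon$ after possibly shrinking $\epsilon$, and replaces $F$ by the elliptic operator $M\mapsto \tfrac1K F(KM)$) puts $w$ exactly in the hypotheses of Lemma \ref{lem_7.10} with the fixed $\alpha\in(0,1)$. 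That lemma yields $|G_M(w,\Omega)\cap Q_1|\ge 1-\rho$, and since $G_M$ scales correctly — $G_M(w,\Omega)=G_{M/K}(v,\Omega)\subset G_M(v,\Omega)=G_M(u,\Omega)$ by the monotonicity $G_{M_2}\subset G_{M_1}$ for $M_2<M_1$ recorded in Section \ref{sec_nr}, after absorbing $K$ into a larger universal $M$ — we conclude $|G_M(u,\Omega)\cap Q_1|\ge 1-\rho$, as desired.

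The main obstacle is the bookkeeping in the middle step: one must verify that the affine functions $\overline L$ and $\underline L$ coming from the touching cones at $x_0$ are controlled (say their slopes and values are universally bounded) so that subtracting $\ell$ genuinely converts the cone bounds into the symmetric bound $|w(x)|\le|x|^{1+\alpha}$ on $\Omega\setminus B_{6\sqrt d}$ with only a universal loss of constants. This is where the normalization $u\in C(B_{8\sqrt d})$ with $\|u\|_{L^\infty}\le 1$, the restriction $x_0\in Q_3$, and the specific radii are used; the argument parallels \cite[Lemma 7.11]{cafcab}, with $C^{1,\alpha}$-cones in place of paraboloids, and the exponent $\alpha$ never enters the constants because the cone profiles $|x-x_0|^{1+\alpha}$ are all comparable to $1$ on the annular region $\Omega\setminus B_{6\sqrt d}$ away from the vertex.
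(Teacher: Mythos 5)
Your proposal is correct and follows essentially the same route as the paper: pick $x_1\in G_1(u,\Omega)\cap Q_3$, use the touching cones to get the two-sided bound $|u-L|\le\tfrac12|x-x_1|^{1+\alpha}$, subtract the affine part and divide by a universal constant $c(d)$ so that the rescaled function satisfies the hypotheses of Lemma \ref{lem_7.10}, then transfer the conclusion back via the monotonicity $G_{M}\subset G_{c(d)M}$. The only cosmetic difference is that the paper works with a single affine function $L$ (the upper and lower cones at $x_1$ necessarily share the same affine part, since they touch $u$ from opposite sides at the same point), which slightly shortens the bookkeeping you describe.
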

\begin{proof}
Let $x_1 \in G_1(u,\Omega)\cap Q_3$. Hence, there exists an affine function $L(x)$, such that
\[
-\frac{1}{2}|x-x_1|^{1+\alpha} \leq u(x)-L(x) \leq \frac{1}{2}|x-x_1|^{1+\alpha} \;\; \text{ in }\;\; \Omega.
\]
Define 
\[
v(x) = \frac{u(x)-L(x)}{c(d)},
\]
where $c(d)$ is a constant depending only on $d$, large enough as to guarantee $|v(x)| \leq 1$ and
\[
|v(x)| \leq |x|^{1+\alpha} \;\;\text{ in }\;\;\Omega\backslash B_{6\sqrt{d}}.
\]
In addition, $v$ solves	
\[
	\tilde{F}(D^2u)-\tilde{f}(x) = 0,
\]
where
\[
\tilde{F}(M) := \frac{1}{c(d)}F(c(d)M),
\]
and
\[
\tilde{f}(x) := \frac{1}{c(d)}f(x).
\]
Lemma \ref{lem_7.10} yields
\[
|G_M(v,\Omega)\cap Q_1| \geq 1-\rho,
\]
therefore
\[
	|G_{c(d)M}(u,\Omega)\cap Q_1|\geq 1-\rho
\]
and the Lemma is proven.
\end{proof}

\bigskip

The following result resorts once again to the Calder\'on-Zygmund decomposition.

\begin{Lemma}\label{lem_7.12}
Let $u \in   C (B_{8\sqrt{d}})$ be a normalized viscosity solution to \eqref{eq_main} in $B_{8\sqrt{d}}$. Suppose that A\ref{assump_ellip}-A\ref{assump_source} are in force, { {with $\left\|f\right\|_{L^d(\Omega)}<\epsilon$.}} Extend f by zero outside $B_{8\sqrt{d}}$ and set
\[
A:=A_{M^{k+1}}(u, B_{8\sqrt{d}})\cap Q_1,
\]
\[
B:=\left\lbrace A_{M^{k}}(u, B_{8\sqrt{d}})\cap Q_1\right\rbrace\cup \left\lbrace x\in Q_1:m(f^d)(x) \geq c_3^dM^{kd}\right\rbrace,
\]
for $k\in\mathbb{N}$. Then
\[
|A| \leq \rho|B|,
\]
where $M>1$ is a universal constant and $c_3>0$ depends only on d, $\lambda$, $\Lambda$ and $\rho$.
\end{Lemma}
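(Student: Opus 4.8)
The plan is to verify the two hypotheses of the Calder\'on--Zygmund decomposition (Lemma~\ref{lem_cald}) for the pair $A\subset B\subset Q_1$, with the prescribed $\delta=\rho$, and then invoke it directly. So first I would check condition (a): $|A|\le \rho$. Since $A\subset A_{M^{k+1}}(u,B_{8\sqrt d})\cap Q_1$, it suffices to know $|A_{M^{k+1}}(u,B_{8\sqrt d})\cap Q_1|$ is small; but actually the cleanest route is to apply Lemma~\ref{lem_7.10} (or Lemma~\ref{lem_7.11}) to get $|G_M(u,B_{8\sqrt d})\cap Q_1|\ge 1-\rho$ when the hypotheses there hold, hence $|A_M(u,B_{8\sqrt d})\cap Q_1|\le\rho$, and then use the monotonicity $A_{M^{k+1}}\subset A_{M}$ (which follows from $G_{M}\subset G_{M^{k+1}}$, itself a consequence of the monotonicity of the families $G_M$ established in the excerpt) for $k\ge 0$. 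Of course one needs the normalization hypothesis of Lemma~\ref{lem_7.10}; if it is not automatically satisfied, I would note that the whole construction can be reduced to the case where it is, or fall back on the $L^\delta$-estimate of Lemma~\ref{lem_7.8} to make $|A|$ as small as desired by taking $M$ large — but the intended argument is surely the former, so condition (a) is essentially immediate.

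The heart of the proof is condition (b): if $Q\subset Q_1$ is a dyadic cube with $|A\cap Q|>\rho|Q|$, then its predecessor $\widetilde Q$ is contained in $B$. I would argue by contraposition: assume $\widetilde Q\not\subset B$, i.e.\ there exists a point $\bar x\in\widetilde Q$ with $\bar x\notin A_{M^k}(u,B_{8\sqrt d})$ (so $\bar x\in G_{M^k}(u,B_{8\sqrt d})\cap Q_1$, giving us a $C^{1,\alpha}$-cone of opening $M^k$ touching $u$ at $\bar x$ from below and above on a neighborhood) and $m(f^d)(\bar x)<c_3^d M^{kd}$. The goal is to deduce $|A\cap Q|\le\rho|Q|$. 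The mechanism is rescaling: let $Q=Q_{2^{-i}}(x_0)$; define the rescaled function $\tilde u(y)=\big(u(x_0+2^{-i}y)-L(y)\big)/\kappa$ where $L$ is the affine part of the cone centered at $\bar x$ and $\kappa\sim M^k 2^{-i(1+\alpha)}$ is the natural scaling factor, chosen so that $\tilde u$ is normalized on the relevant ball and satisfies the pointwise bound $-|y|^{1+\alpha}\le\tilde u(y)\le|y|^{1+\alpha}$ outside $B_{6\sqrt d}$ — here is where the hypotheses $\bar x\in G_{M^k}$ (controlling $u-L$ by the cone) and $\bar x\in\widetilde Q$ close to $Q$ (so the dilated copy of $Q$ sits where we want) are used. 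One also checks $\tilde f(y):=2^{-i}\kappa^{-1}f(x_0+2^{-i}y)$ has $\|\tilde f\|_{L^d}$ controlled, using precisely the bound $m(f^d)(\bar x)<c_3^dM^{kd}$ together with the correct choice of $c_3$ in terms of $d,\lambda,\Lambda,\rho$. Then Lemma~\ref{lem_7.11} (whose hypothesis $G_1(\tilde u,\cdot)\cap Q_3\ne\emptyset$ is arranged by the cone at $\bar x$) applies to $\tilde u$ and yields $|G_{M}(\tilde u,\cdot)\cap Q_1|\ge 1-\rho$; unscaling, this says exactly $|A_{M^{k+1}}(u,B_{8\sqrt d})\cap Q|\le\rho|Q|$, i.e.\ $|A\cap Q|\le\rho|Q|$, contradicting $|A\cap Q|>\rho|Q|$.

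I expect the main obstacle to be bookkeeping the rescaling so that all the geometric inclusions line up: one must ensure that the dilated cube $Q$ is positioned inside $Q_3$ (so the contact point survives in the hypothesis of Lemma~\ref{lem_7.11}) while the rescaled domain still contains $B_{8\sqrt d}$ and the exterior pointwise bound $|\tilde u|\le|y|^{1+\alpha}$ holds on the annular region $\Omega\setminus B_{6\sqrt d}$ — this is the standard but delicate "diam $Q_1$ vs.\ diam $Q_3$ vs.\ $6\sqrt d$" juggling from \cite[Chapter 7]{cafcab}, and it is where the specific numeric radii ($8\sqrt d$, $7\sqrt d$, $6\sqrt d$) have been chosen to make things fit. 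The second delicate point is calibrating $c_3$: the maximal-function bound on $f^d$ at $\bar x$ must translate, after the $2^{-i}\kappa^{-1}$ scaling of $f$, into the smallness threshold $\epsilon$ required by Lemma~\ref{lem_7.10}/\ref{lem_7.11}; tracking the powers of $M^k$ and $2^{-i}$ and matching the homogeneity $2s$ vs.\ $1+\alpha$ is where a wrong exponent would break the argument, so I would be careful that $\kappa\sim M^k2^{-i(1+\alpha)}$ makes $\|\tilde f\|_{L^d}\lesssim c_3$ and then fix $c_3$ small depending on $\rho$. Everything else is a routine invocation of the already-established lemmas.
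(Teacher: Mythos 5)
Your proposal follows the paper's proof essentially verbatim: condition (a) is obtained from Lemma \ref{lem_7.10} applied with $\Omega=B_{8\sqrt d}$ (using $|u|\le 1\le |x|^{1+\alpha}$ outside $B_{6\sqrt d}$) together with the monotonicity of the sets $A_{M^k}$, and condition (b) by assuming $\widetilde Q\not\subset B$, rescaling about the dyadic cube $Q=Q_{1/2^i}(x_0)$, using the contact point $x_1\in\widetilde Q\cap G_{M^k}(u,B_{8\sqrt d})$ and the bound $m(f^d)(x_1)\le (c_3M^k)^d$ to verify the hypotheses of Lemma \ref{lem_7.11} for the rescaled function, then unscaling to contradict $|A\cap Q|>\rho|Q|$, and finally invoking the Calder\'on--Zygmund Lemma \ref{lem_cald}. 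The only (harmless) deviation is your normalization $\kappa\sim M^k2^{-i(1+\alpha)}$ with the affine part of the cone subtracted, whereas the paper takes $v(y)=2^{2i}M^{-k}u(x_0+2^{-i}y)$ and lets Lemma \ref{lem_7.11} absorb the affine normalization.
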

\begin{proof}
We start by noticing that $|u|\leq 1 \leq |x|^{1 + \alpha}$ in $B_{8\sqrt{d}}\backslash B_{6\sqrt{d}}$. Hence  Lemma \ref{lem_7.10} applied with $\Omega = B_{8\sqrt{d}}$, implies 
\[
|G_{M^{k+1}}(u,B_{8\sqrt{d}})\cap Q_1| \geq |G_{M^{k}}(u,B_{8\sqrt{d}})\cap Q_1| \geq 1-\rho.
\] 
It leads to $|A| \leq \rho$.

The remainder of the proof relies on the Calder\'on-Zygmund decomposition, as stated in Lemma \ref{lem_cald}. Hence, we need to show that if $Q = Q_{1/2^i}(x_0)$ is a dyadic cube $Q_1$ such that
\begin{equation}\label{eq_013}
|A_{M^{k+1}}(u,B_{8\sqrt{d}})\cap Q| = |A\cap Q| > \rho|Q|,
\end{equation}
we have $\tilde{Q} \subset B$. We suppose otherwise and produce a contradiction. Suppose that $\tilde{Q} \not\subset B$ and let $x_1$ be such that
\begin{equation}\label{eq_014}
x_1\in\tilde{Q}\cap G_{M^k}(u,B_{8\sqrt{d}})
\end{equation}
and
\begin{equation}\label{eq_015}
m(f^d)(x_1) \leq \left(c_3M^{k}\right)^d.
\end{equation}
Consider as before the transformation 
\begin{equation}\label{eq_transf2}
x = x_0+\frac{1}{2^i}y, \;\;x\in B_{8\sqrt{d}},
\end{equation}
and define
\[
v(y) = \frac{2^{2i}}{M^k}u\left(x_0 + \frac{1}{2^i}y\right).
\]
Finally, let $\tilde{\Omega}$ be the image of $B_{8\sqrt{d}}$ under the transformation \eqref{eq_transf2}. We need to verify that $v$ satisfies the hypothesis of Lemma \ref{lem_7.11}. Note that $v$ solves
\[
	\tilde{F}(D^2u) - \tilde{f}(x) = 0 \;\; in \;\; \tilde{\Omega},
\]
where
\[
\tilde{F}(N) := \frac{1}{M^k}F\left(M^kN\right)
\]
and
\[
\tilde{f}(x) := \frac{1}{M^{k}}f\left(x_0+\frac{1}{2^i}x\right).
\]
Since $B_{8\sqrt{d}} \subset \tilde{\Omega}$, the function $v$ satisfies the equation in $B_{8\sqrt{d}}$, in the viscosity sense. Furthermore $|x_1-x_0|_{\infty} \leq 3/2^{i+1}$ implies that $B_{8\sqrt{d}/2^i}(x_0) \subset Q_{19\sqrt{d}/2^i}(x_1)$. Hence 
\[
\begin{array}{rcl}
\|\tilde{f}\|^d_{B_{8\sqrt{d}}} & = &\displaystyle \frac{2^{id}}{M^{kd}}\int_{B_{8\sqrt{d}/2^i}(x_0)}|f(x)|^d{\rm d}x \vspace{0.2cm}\\
 &\leq & \displaystyle \frac{c(d)}{M^{kd}}\frac{1}{|Q_{19\sqrt{d}/2^i}|}\int_{Q_{19\sqrt{d}/2^i}(x_1)}|f(x)|^d{\rm d}x \\
 & \leq & c(d)c_3^d \\
 & \leq & \epsilon,
\end{array}
\]
for $c_3$ small enough.

Now, by \eqref{eq_014} there exists a convex and a concave $ C ^{1,\alpha}$-cones of opening $M^k$, $\psi_1$ and $\psi_2$ respectively, such that $\psi_1$ touches $u$ from above at $x_1$ and $\psi_2$ touches $u$ from below at $x_1$. Define
\[
\tilde{\psi_1}(y) := \psi_1\left(x_0+\frac{1}{2^i}y\right)
\]
and 
\[
\tilde{\psi_2}(y) := \psi_2\left(x_0+\frac{1}{2^i}y\right).
\]
It is easy to see that $\tilde{\psi_1}$ (resp. $\tilde{\psi_2}$) touches $v$ from above (respectively from below) in a point $y_1$ such that $x_1 = x_0 + \frac{1}{2^i}y_1$. Therefore $G_1(v, \tilde{\Omega}) \not= \emptyset$. By Lemma \ref{lem_7.11} we obtain
\[
|G_M(v, \tilde{\Omega})\cap Q_1| \geq 1-\rho = (1-\rho)|Q_1|.
\] 
Hence
\[
|G_{M^{k+1}}(u, B_{8\sqrt{d}})\cap Q| \geq (1-\rho)|Q|,
\]
which implies
\[
|A_{M^{k+1}}(u,B_{8\sqrt{d}})\cap Q| \leq \rho|Q|.
\]
This fact produces a contradiction with \eqref{eq_013} and finishes the proof.
\end{proof}

Next, we detail the proof of Proposition \ref{prop_delta2}.

\begin{proof}[Proof of Proposition \ref{prop_delta2}]
Let $M$ be as in Lemma \ref{lem_7.12} and take $\rho$ such that
\[
\rho M^{p} = \frac{1}{2}.
\]
For $k \geq 0$, define
\[
\alpha_k :=|A_{M^k}(u,B_{8\sqrt{d}})\cap Q_1| 
\]
and
\[
\beta_k:= \Big|\left\lbrace x\in Q_1~ | ~  m(f^d)(x)\geq \left(c_3M^{k}\right)^d\right\rbrace\Big|.
\]
By Lemma \ref{lem_7.12} we obtain $\alpha_{k+1} \leq \rho(\alpha_k + \beta_k)$. Hence
\begin{equation}\label{eq_016}
\alpha_k \leq \rho^k + \sum_{i=0}^{k-1}\rho^{k-i}\beta_i.
\end{equation}
Since $f^d \in L^{p/d}(B_{8\sqrt{d}})$, we have that $m(f^d) \in L^{p/d}(B_{8\sqrt{d}})$ and 
\[
\|m(f^d)\|_{L^{p/d}(B_{8\sqrt{d}})} \leq c\|f\|^d_{L^p(B_{8\sqrt{d}})} \leq C.
\]
Therefore, by Lemma \ref{lem_reg} we obtain
\[
\sum_{k\geq 0}\left(M^{d}\right)^\frac{pk}{d}|x\in Q_1~ | ~ m(f^d)(x)\geq c_3^dM^{dk}| \leq C.
\]
The former inequality implies
\begin{equation}\label{eq_017}
\sum_{k\geq 0}M^{pk}\beta_k \leq C.
\end{equation}
Since $B_{1/2} \subset Q_1$, the distribution function of $\theta$ is bounded from above as follows:
\[
\mu_\theta(t) \leq |A_t(u,B_{1/2})| \leq |A_t(u,B_{8\sqrt{d}})\cap Q_1|.
\]
Hence
\[
\begin{array}{rcl}
\displaystyle \sum_{k\geq 1}M^{pk}\alpha_k & \leq & \displaystyle \sum_{k\geq 1}\left(\rho M^{p}\right)^k + \sum_{k\geq 1}\sum_{i=0}^{k-1}\rho^{k-i}M^{pk}\beta_i \vspace{0.1cm} \\
 & = & \displaystyle \sum_{k\geq 1}2^{-k} + \sum_{k\geq 1}\sum_{i=0}^{k-1}\rho^{k-i}M^{p(k-i)}M^{pi}\beta_i \vspace{0.1cm} \\
 & = & \displaystyle \sum_{k\geq 1}2^{-k} + \sum_{k\geq 1}\sum_{i=0}^{k-1}2^{-(k-i)}M^{pi}\beta_i \vspace{0.1cm} \\  
 & = & \displaystyle \sum_{k\geq 1}2^{-k} + \left(\sum_{i\geq 0}M^{pi}\beta_i \right)\left(\sum_{j\geq 1}2^{-j}\right) \vspace{0.1cm} \\
 & \leq & C.
\end{array}
\]
Applying Lemma \ref{lem_reg} once again we conclude that $\|\theta\|_{L^{p}(B_{1/2})} \leq C$ and the proof is complete.
\end{proof}

\section{Fractional diffusion and a proof of Theorem \ref{theo_main} }\label{sec_proofs}

In this section, we detail the proof of Theorem \ref{theo_main}. 
\begin{proof}[Proof of Theorem \ref{theo_main}]
We start by noticing that if the graph of $u$ is touched at a point $x_0$ by a $C^{1,\alpha}$-cone, we may assume $x_0$ is the vertex of the cone.
Let $\psi$ be a $ C ^{1,\alpha}$-cone of opening $\pm M$ and vertex $x_0$. We have:
\[
\begin{array}{lcl}
\Delta^{1+\alpha}_h\psi(x_0) & := & \frac{\psi(x_0+h) + \psi(x_0-h) - 2\psi(x_0)}{|h|^{1+\alpha}} \vspace{0.2cm}\\
 &= &  \pm M.
\end{array}
\]

Also, notice that touching $u$ strictly in $B_{\frac{1}{10}}(x_0)$  from above at $x_0$ by a convex $ C ^{1,\alpha}$-cone $\psi$ of opening $M$ and vertex $x_0$ gives for all $0< h < \frac{1}{10}$  
\[
	\begin{array}{lcl}
	\Delta^{1+\alpha}_h u(x_0) & := & \displaystyle \frac{u(x_0+h) + u(x_0-h) - 2u(x_0)}{|h|^{1+\alpha}} \vspace{0.2cm}\\
 						&< &  \displaystyle  \frac{\psi(x_0+h) + \psi(x_0-h) - 2\psi(x_0)}{|h|^{1+\alpha}} \vspace{0.2cm}\\
						&\le& \displaystyle  \theta(u,B_{1/2})(x_0).	
	\end{array}
\]
Similarly, touching $u$ strictly in $B_{\frac{1}{10}}(x_0)$  from below at $x_0$ by a concave $ C ^{1,\alpha}$-cone $\psi$ of opening $M$ and vertex $x_0$ gives, for all $0< h < \frac{1}{10}$:
$$
	-\theta(u,B_{1/2})(x_0)	 <  \Delta^{1+\alpha}_h u(x_0).
$$
Hence, by hypothesis,
$$
	\| \Delta^{1+\alpha}_h u  \|_{L^{p}(B_{1/2})} \leq C,
$$
uniformly for all $0< h < \frac{1}{10}$, for $0<\alpha < \alpha_0$. At this point, we set $\varphi := u\chi_{B_{1/2}}$ in $\mathbb{R}^d$. Then we have that $\varphi \in L^{\infty}(\mathbb{R}^d)$. Next, for 
\[
1< \sigma < 1 + \alpha,
\]
we define the singular integral operator:
\[
I_{\sigma/2}(v)(x_0) := \int_{\mathbb{R}^d}\frac{v(x_0+y) + v(x_0-y) - 2v(y)}{|y|^{d+\sigma}}{\rm d}y.
\]
Notice that, up to constants, $I_{\sigma/2}(v) \sim \Delta^{\sigma/2}(v)$. For $x_0 \in B_{1/2}$ we estimate
\[
\begin{array}{lcl}
I_{\sigma/2}(\varphi)(x_0) & = & \displaystyle \int_{\mathbb{R}^d}\frac{\varphi(x_0+y) + \varphi(x_0-y) - 2\varphi(y)}{|y|^{d+\sigma}} {\rm d}y \vspace{0.2cm} \\
 & = & \displaystyle \int_{B_{1/10}}\frac{u(x_0+y) + u(x_0-y) - 2u(y)}{|y|^{d+\sigma}} {\rm d}y \vspace{0.2cm} \\
 &   & + \displaystyle \int_{B_1\setminus B_{1/10}}\frac{u(x_0+y) + u(x_0-y) - 2u(y)}{|y|^{d+\sigma}} {\rm d}y \vspace{0.2cm} \\
 & \leq & \displaystyle \theta(u, B_{1/2})(x_0)\int_{B_{1/10}}\frac{dy}{|y|^{d-\mu}} + C\|u\|_{L^{\infty}(B_{1})} \vspace{0.2cm} \\
 & = & \displaystyle \frac{C}{\mu 10^d}\cdot \left ( \theta(u,B_{1/2}) + \|u\|_{L^{\infty}(B_{1})} \right )
\end{array}
\]
where $\mu := 1 + \alpha - \sigma,$ and $C$ is a universal constant. Hence, we have proven that
\[
	(-\Delta)^{\sigma/2}\varphi \in L^{p}(B_{1/2}).
\]
By setting $g:= (-\Delta)^{\sigma/2}\varphi$ in $B_{1/2}$ we conclude that $\varphi$ satisfies
\[
	\begin{cases}
		(-\Delta)^{\sigma/2}\varphi = g&\hspace{.2in} \mbox{in}\hspace{.2in}   B_{1/2} \\
		\varphi = 0 &\hspace{.2in} \mbox{in}\hspace{.2in}     \mathbb{R}^d\setminus B_{1/2} .
	\end{cases}
\]
Now, we aim at showing that $\varphi \in W^{\sigma/2, 2}(\mathbb{R}^d)$. Extend $g$ by zero outside $B_{1/2}$. It is clear that $g \in L^2(\mathbb{R}^d)$. Hence ${\mathcal F}(g) \in L^2(\mathbb{R}^d)$. In addition, since $\varphi \in L^2(\mathbb{R}^d)$, ${\mathcal F}(\varphi) \in L^2({\mathbb{R}^d})$ as well. By Proposition \ref{prop_fourfrac} (applied to functions in $L^2(\mathbb{R}^d)$)
\[
{\mathcal F}(g)(x) = (|x|)^{\sigma}{\mathcal F}(\varphi).
\]

Furthermore, we have that
\[
(1 + |x|^{\sigma}){\mathcal F}(\varphi) = {\mathcal F}(\varphi) + {\mathcal F}(g).
\]
It follows that $(1 + |x|^\sigma){\mathcal F}(\varphi) \in L^{2}(\mathbb{R}^d)$. In particular, 
\[
\int_{\mathbb{R}^d}(1 + |x|^{\sigma})|{\mathcal F}(\varphi)(x)|^2dx < \infty.
\]
Hence, by Proposition \ref{prop_coincspac}, we conclude that $\varphi \in W^{\sigma/2, 2}(\mathbb{R}^d)$. Finally, by Proposition \ref{prop_lapfrac}, we obtain that $\varphi \in W^{\sigma,p}_{\rm loc}(B_{1/2})$. Therefore
\[
u \in W^{\sigma,p}_{\rm loc}(B_{1/2}),
\]
with universal estimates, which ends the proof.
\end{proof}

\begin{Remark}[Escauriaza's exponent] 
We believe it is possible to extend our results to the range $p_0< p\leq d$, where $p_0$ is the Escauriaza's exponent. Indeed, it suffices to replace Lemma \ref{lem_7.8}  and Proposition \ref{lem_7.9} in the present paper with Lemmas 4 and 5 in \cite{escauriaza}. 
\end{Remark}

\bigskip

\noindent{\bf Acknowledgements:} EP is partly funded by FAPERJ (E-26/200.002/2018), CNPq-Brazil (433623/2018-7, 307500/2017-9), and Instituto Serrapilheira (1811-25904).
MS is funded by PUC-Rio Archimedes Fund. ET thanks UCF start-up fundings. This work was partially supported by the Centre for Mathematics of the University of Coimbra - UIDB/00324/2020, funded by the Portuguese Government through FCT/MCTES.

\vspace{.3in}

\noindent\textsc{Edgard A. Pimentel}\\
University of Coimbra\\
CMUC, Department of Mathematics,\\
3001-501 Coimbra, Portugal\\
and \\
Department of Mathematics\\
Pontifical Catholic University of Rio de Janeiro -- PUC-Rio\\
22451-900, G\'avea, Rio de Janeiro-RJ, Brazil\\
\noindent\texttt{edgard.pimentel@mat.uc.pt}

\bigskip

\noindent\textsc{Makson S. Santos}\\
Instituto Superior T\'ecnico\\Department of Mathematics\\
1049-001, Lisbon, Portugal\\
\noindent\texttt{makson.santos@tecnico.ulisboa.pt}

\bigskip

\noindent\textsc{Eduardo V. Teixeira (Corresponding Author)}\\
University of Central Florida\\
4393 Andromeda Loop N, Orlando, FL 32816, USA\\
\noindent\texttt{eduardo.teixeira@ucf.edu}


\begin{thebibliography}{50}

\bibitem{ArmSilSma_2012}
Scott N. Armstrong, Luis Silvestre, Charles K. Smart.
\newblock Partial regularity of solutions of fully nonlinear, uniformly elliptic equations.
\newblock {\em Comm. Pure Appl. Math.}, 65 (2012), no. 8, 1169--1184,
  2015.

\bibitem{Biccari-Warma-Zuazua2017}
Umberto Biccari, Mahamadi Warma, and Enrique Zuazua.
\newblock Addendum: {L}ocal elliptic regularity for the {D}irichlet fractional
  {L}aplacian [ {MR}3641649].
\newblock {\em Adv. Nonlinear Stud.}, 17(4):837--839, 2017.

\bibitem{caf}
Luis Caffarelli.
\newblock Interior a priori estimates for solutions of fully nonlinear
  equations.
\newblock {\em Ann. of Math. (2)}, 130(1):189--213, 1989.

\bibitem{cafcab}
Luis Caffarelli and Xavier Cabr\'e.
\newblock {\em Fully nonlinear elliptic equations}, volume~43 of {\em American
  Mathematical Society Colloquium Publications}.
\newblock American Mathematical Society, Providence, RI, 1995.

\bibitem{CCKS}
Luis Caffarelli, Michael G. Crandall, Maciej Kocan, and Andrzej \'Swi\polhk{e}ch.
\newblock On viscosity solutions of fully nonlinear equations with measurable
  ingredients.
\newblock {\em Comm. Pure Appl. Math.}, 49(4):365--397, 1996.

\bibitem{Nezza-Palatucci-Valdinoci2012}
Eleonora Di Nezza, Giampiero Palatucci and Enrico Valdinoci.
\newblock Hitchhiker's guide to the fractional {S}obolev spaces.
\newblock {\em Bull. Sci. Math.}, 136(5):521--573, 2012.

\bibitem{escauriaza}
Luis Escauriaza.
\newblock {$W^{2,n}$} a priori estimates for solutions to fully nonlinear
  equations.
\newblock {\em Indiana Univ. Math. J.}, 42(2), 413--423, 1993.

\bibitem{fabesstroock}
Eugene Fabes and Daniel Stroock.
\newblock The {$L^p$}-integrability of {G}reen's functions and fundamental
  solutions for elliptic and parabolic equations.
\newblock {\em Duke Math. J.}, 51(4), 997--1016, 1984.

\bibitem{krylovc2a}
Nikolai V. Krylov.
\newblock Boundedly inhomogeneous elliptic and parabolic equations.
\newblock {\em Izv. Akad. Nauk SSSR Ser. Mat.}, 46(3), 487--523, 670, 1982.

\bibitem{krysaf}
Nikolai V. Krylov and Mikhail Safonov.
\newblock A property of the solutions of parabolic equations with measurable
  coefficients.
\newblock {\em Izv. Akad. Nauk SSSR Ser. Mat.}, 44(1), 161--175, 239, 1980.

\bibitem{Lin_1986}
Fang-Hua Lin.
\newblock Second derivative {$L^p$}-estimates for elliptic equations of nondivergent type.
\newblock {\em Proc. Amer. Math. Soc.}, 96 (1986), no. 3, 447--451.

\bibitem{Mooney}
Connor Mooney.
\newblock A proof of the Krylov-Safonov theorem without localization.
\newblock {\em Comm. Partial Differential Equations}, 44 (2019), no. 8, 681--690.

\bibitem{NV1} Nikolai S. Nadirashvili and Serge Vl\u{a}du\c{t}.
\newblock {Nonclassical solutions of fully nonlinear elliptic equations.} 
\newblock {\em Geom. Funct. Anal.} 17 (2007), no. 4, 1283--1296.

\bibitem{NV2} Nikolai S. Nadirashvili and Serge Vl\u{a}du\c{t}.
\newblock  Singular viscosity solutions to fully nonlinear elliptic equations.  
\newblock { \em J. Math. Pures Appl.} (9) 89 (2008), no. 2, 107--113.

\bibitem{NV3} Nikolai S. Nadirashvili and Serge Vl\u{a}du\c{t}.
\newblock Octonions and singular solutions of Hessian elliptic equations.  
\newblock{\em Geom. Funct. Anal. } 21 (2011), no. 2, 483--498.

\bibitem{NV4} Nikolai S. Nadirashvili and Serge Vl\u{a}du\c{t}.
\newblock Singular solutions of Hessian fully nonlinear elliptic equations. 
\newblock{\em Adv. Math.} 228 (2011), no. 3, 1718--1741.

\bibitem{PT} Edgard A. Pimentel and Eduardo V. Teixeira. 
\newblock
{Sharp Hessian integrability estimates for nonlinear elliptic equations: An asymptotic approach.} 
\newblock {\em J. Math. Puree Appl.} 106 (2016), no. 4, 744--767


\bibitem{S} Andrzej \'Swi\c{e}ch. 
\newblock
{$W^{1,p}$--interior estimates for solutions of fully nonlinear, uniformly elliptic equations.} 
\newblock {\em Adv. Differential Equations} 2 (1997), no. 6, 1005--1027

\bibitem{M1}
Eduardo V. Teixeira.
\newblock Regularity for quasilinear equations on degenerate singular sets.
\newblock {\em Math. Ann.}, 358(1-2):241--256, 2014.

\bibitem{M5}
Eduardo V. Teixeira.
\newblock Universal moduli of continuity for solutions to fully nonlinear
  elliptic equations.
\newblock {\em Arch. Ration. Mech. Anal.}, 211(3):911--927, 2014.

\bibitem{TeixNotices} Eduardo V. Teixeira
\newblock {Regularity theory for nonlinear diffusion processes},  
\newblock {\em Notices Amer. Math. Soc.} 67 (2020), no. 4, 475--483.

\bibitem{trudinger}
Neil S. Trudinger.
\newblock H\"{o}lder gradient estimates for fully nonlinear elliptic equations.
\newblock {\em Proc. Roy. Soc. Edinburgh Sect. A}, 108(1-2), 57--65, 1988.

\end{thebibliography}
\end{document}